\definecolor{darkred}{rgb}{0.7,0.1,0.1}
\definecolor{darkblue}{rgb}{0.1,0.1,0.4}
\definecolor{darkgrey}{rgb}{0.5,0.5,0.5}
\numberwithin{equation}{section}
\theoremstyle{plain}% default
\newtheorem{thm}{Theorem}[section]
\newtheorem{lem}[thm]{Lemma}
\newtheorem{prop}[thm]{Proposition}
\newtheorem{cor}[thm]{Corollary}
\theoremstyle{remark}
\newtheorem{remark}[thm]{Remark}
\theoremstyle{plain}
\newcommand{\hyp}[1]{$C^{2}$-hypersurface as in Definition~\ref{definition_hypersurface}}
\newcommand{\sign}{\mathrm{sign}\,}   %Changed by Markus
\newcommand{\dom}{\mathrm{dom}\,}
\begin{document}
\title[]{A note on the three dimensional Dirac operator with zigzag type boundary conditions}
\author[]{}

% \author[J. Behrndt]{Jussi Behrndt}
% \address{Institut f\"{u}r Angewandte Mathematik\\
% Technische Universit\"{a}t Graz\\
%  Steyrergasse 30, 8010 Graz, Austria\\
% E-mail: {\tt behrndt@tugraz.at}}

\author[M. Holzmann]{Markus Holzmann}
\address{Institut f\"{u}r Angewandte Mathematik\\
Technische Universit\"{a}t Graz\\
 Steyrergasse 30, 8010 Graz, Austria\\
E-mail: {\tt holzmann@math.tugraz.at}}

\keywords{Dirac operator, boundary conditions, spectral theory, eigenvalue of infinite multiplicity}

\subjclass[2010]{Primary 81Q10; Secondary 35Q40} 
\maketitle

\begin{center}
{ \em \small Dedicated with great pleasure to my teacher, colleague, and friend \\
Henk de Snoo on the occasion of his 75th birthday.}
\end{center}

\begin{abstract}
  In this note the three dimensional Dirac operator $A_m$ with boundary conditions, which are the analogue of the two dimensional zigzag boundary conditions, is investigated. It is shown that $A_m$ is self-adjoint in $L^2(\Omega;\mathbb{C}^4)$ for any open set $\Omega \subset \mathbb{R}^3$ and its spectrum is  described explicitly in terms of the spectrum of the Dirichlet Laplacian in $\Omega$. In particular, whenever the spectrum of the Dirichlet Laplacian is purely discrete, then also the spectrum of $A_m$ consists of discrete eigenvalues that accumulate at $\pm \infty$ and one additional eigenvalue of infinite multiplicity.
\end{abstract}

\section{Introduction}

In the recent years Dirac operators with boundary conditions, which make them self-adjoint, gained a lot of attention. From the physical point of view, they appear in various applications such as in the description of relativistic particles that are confined in a box $\Omega \subset \mathbb{R}^3$; in this context the MIT bag model is a particularly interesting example, cf. \cite{ALTR17}. Moreover, in space dimension two the spectral properties of self-adjoint massless Dirac operators play an important role in the mathematical description of graphene, see, e.g., \cite{BFSB17_2} and the references therein. On the other hand, from the mathematical point of view, self-adjoint Dirac operators with boundary conditions are viewed as the relativistic counterpart of Laplacians with Robin type and other boundary conditions.

To set the stage, let $\Omega \subset \mathbb{R}^3$ be an open set, let
\begin{equation} \label{def_Pauli_matrices}
   \sigma_1 := \begin{pmatrix} 0 & 1 \\ 1 & 0 \end{pmatrix}, \qquad
   \sigma_2 := \begin{pmatrix} 0 & -i \\ i & 0 \end{pmatrix}, \qquad
   \sigma_3 := \begin{pmatrix} 1 & 0 \\ 0 & -1 \end{pmatrix},
\end{equation}
be the Pauli spin matrices, and let
\begin{equation} \label{def_Dirac_matrices}
   \alpha_j := \begin{pmatrix} 0 & \sigma_j \\ \sigma_j & 0 \end{pmatrix}
   \quad \text{and} \quad \beta := \begin{pmatrix} I_2 & 0 \\ 0 & -I_2 
\end{pmatrix}
\end{equation}
be the $\mathbb{C}^{4 \times 4}$-valued Dirac matrices, where $I_d$ denotes the identity matrix in $\mathbb{C}^{d \times d}$. For $m \in \mathbb{R}$ we introduce the differential operator $\tau_m$ acting on distributions by
\begin{equation} \label{Dirac_operator_intro}
  \tau_m := -i \sum_{j=1}^3 \alpha_j \partial_j + m \beta =: -i \alpha \cdot \nabla + m \beta.
\end{equation}
The main goal in this short note is to study the self-adjointness and the spectral properties of the Dirac operator $A_m$ in $L^2(\Omega; \mathbb{C}^4) := L^2(\Omega) \otimes \mathbb{C}^4$ which acts as $\tau_m$ on functions $f = (f_1, f_2, f_3, f_4) \in L^2(\Omega; \mathbb{C}^4)$ which satisfy $\tau_m f \in L^2(\Omega; \mathbb{C}^4)$ and the boundary conditions 
\begin{equation} \label{boundary_conditions_intro}
  f_3 |_{\partial \Omega} = f_4|_{\partial \Omega} = 0;
\end{equation}
for irregular or unbounded domains~\eqref{boundary_conditions_intro} is understood as $f_3,f_4 \in H^1_0(\Omega)$. Note that no boundary conditions are imposed for the components $f_1$ and $f_2$. If $m>0$, then the solution of the evolution equation with Hamiltonian $A_m$ describes the propagation of a quantum particle with mass $m$ and spin $\frac{1}{2}$ in $\Omega$ taking these boundary conditions and relativistic effects into account.

The motivation to study the operator $A_m$ is twofold. Firstly, in the recent paper \cite{BHM19} Dirac operators in $L^2(\Omega; \mathbb{C}^4)$ acting as $\tau_m$ on functions satisfying the boundary conditions
\begin{equation} \label{boundary_conditions_BHM}
   \vartheta \big(I_4 + i \beta (\alpha \cdot \nu)\big) f|_{\partial \Omega} 
      = \big(I_4 + i \beta (\alpha \cdot \nu)\big) \beta f|_{\partial \Omega}
\end{equation}
were studied in the case that $m> 0$ and that $\Omega$ is a $C^2$-domain with compact boundary and unit normal vector field $\nu$; in~\eqref{boundary_conditions_BHM} the convention $\alpha \cdot x = \alpha_1 x_1 + \alpha_2 x_2 + \alpha_3 x_3$ is used for $x = (x_1,x_2,x_3) \in \mathbb{R}^3$. The authors were able to prove the self-adjointness and to derive the basic spectral properties of these operators, whenever the parameter $\vartheta$ appearing in~\eqref{boundary_conditions_BHM} is a real-valued H\"older continuous function of order $a > \frac{1}{2}$ satisfying $\vartheta(x) \neq \pm 1$ for all $x \in \partial \Omega$, and it is shown that the domain of definition of these self-adjoint operators is contained in the Sobolev space $H^1(\Omega; \mathbb{C}^4)$. For bounded domains $\Omega$ this implies, in particular, that the spectrum is purely discrete. 
The case when $\vartheta(x) = \pm 1$ for some $x \in \partial \Omega$ remained open and it is conjectured that different spectral properties should appear. We note that $\vartheta \equiv 1$ corresponds to the boundary conditions~\eqref{boundary_conditions_intro}.
Let us mention that the self-adjointness and spectral properties of Dirac operators with boundary conditions of the form~\eqref{boundary_conditions_BHM} for special realizations $\vartheta \neq 1$ were studied in 3D in \cite{ALTMR19, ALTR17, OV18} and in 2D in \cite{BFSB17_1, BFSB17_2, LTO18, LO18}.

The second main motivation for this study is the paper \cite{S95}, where the two dimensional counterpart of $A_m$ was investigated in the massless case ($m=0$). The two dimensional Dirac operator is a differential operator in $L^2(\Omega; \mathbb{C}^2)$ and for a spinor $f = (f_1,f_2)$ the zigzag boundary conditions are $f_2=0$ on $\partial \Omega$, while there are no boundary conditions for $f_1$, cf. \cite{BFSB17_1, S95}. The two dimensional zigzag boundary conditions have a physical relevance, as they appear in the description of graphene quantum dots, when a lattice in this quantum dot is terminated and the direction of the boundary is perpendicular to the bonds \cite{GPKH14}.
It was shown in \cite{S95} that the two dimensional Dirac operator with these zigzag boundary conditions is self-adjoint on a domain which is in general not contained in $H^1(\Omega)$ and that for any bounded domain $\Omega$ zero is an eigenvalue with infinite multiplicity. In particular, the spectrum of the operator is not purely discrete. 

The goal in the present note is to prove similar and even more explicit results as those in \cite{S95} also in the three dimensional setting, which complement then the results from \cite{BHM19} in the critical case $\vartheta \equiv 1$ at least for constant boundary parameters. In Lemma~\ref{lemma_theta_minus_one} we will see that the operator corresponding to $\vartheta \equiv -1$ in~\eqref{boundary_conditions_BHM} is unitarily equivalent to $-A_m$ and hence this case is also contained in the analysis in this note. In the formulation of the following main result of the present paper we denote by $-\Delta_D$ the self-adjoint realization of the Dirichlet Laplacian in $L^2(\Omega)$.

\begin{thm} \label{theorem_intro}
  The operator $A_m$ is self-adjoint in $L^2(\Omega; \mathbb{C}^4)$ and its spectrum is 
  \begin{equation*}
    \sigma(A_m) = \{ m \} \cup \left\{ \pm \sqrt{\lambda + m^2}: \lambda \in \sigma(-\Delta_D) \right\}.
  \end{equation*}
  The value $m$ always belongs  to the essential spectrum of $A_m$, while for $m\neq 0$ the number $-m$ is not an eigenvalue of $A_m$. Moreover, for $\lambda > 0$ the numbers $\pm \sqrt{\lambda + m^2}$ are both eigenvalues of $A_m$ with multiplicity $2k$ if and only if $\lambda$ is an eigenvalue of $-\Delta_D$ with multiplicity $k$. 
\end{thm}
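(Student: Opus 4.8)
The plan is to split every $f=(f_1,f_2,f_3,f_4)$ into $u:=(f_1,f_2)$ and $v:=(f_3,f_4)$, to abbreviate $\sigma\cdot\nabla:=\sigma_1\partial_1+\sigma_2\partial_2+\sigma_3\partial_3$, and to rewrite
$A_m=\begin{pmatrix} mI_2 & S\\ S^* & -mI_2\end{pmatrix}$, $\dom A_m=\dom S^*\oplus\dom S$,
where $S$ is the operator $v\mapsto -i(\sigma\cdot\nabla)v$ with $\dom S=H^1_0(\Omega;\mathbb{C}^2)$, and $S^*$ acts by the same differential expression on the maximal domain $\{w\in L^2(\Omega;\mathbb{C}^2):(\sigma\cdot\nabla)w\in L^2\}$. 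First one checks that $S$ is densely defined and closed — closedness following from the identity $\|(\sigma\cdot\nabla)v\|_{L^2}=\|\nabla v\|_{L^2}$, valid on $C_c^\infty$ and hence on $H^1_0$ — and that $\dom S^*\oplus\dom S$ coincides with the domain described in the introduction (the condition $\tau_m f\in L^2$ together with $f_3,f_4\in H^1_0$ is exactly $u\in\dom S^*$, $v\in\dom S$). Self-adjointness of $A_m$ is then immediate: for any closed densely defined $S$ the off-diagonal operator $\begin{pmatrix}0&S\\S^*&0\end{pmatrix}$ on $\dom S^*\oplus\dom S$ is self-adjoint (a short computation of the adjoint), and adding the bounded self-adjoint diagonal $\mathrm{diag}(mI_2,-mI_2)$ preserves self-adjointness.

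Next I would identify $S^*S$ and $A_m^2$. A direct calculation gives $\dom(S^*S)=\{v\in H^1_0(\Omega;\mathbb{C}^2):\Delta v\in L^2\}$ with $S^*Sv=-\Delta v$, i.e. $S^*S=(-\Delta_D)\otimes I_2$, and, since the cross terms cancel, $A_m^2=(SS^*+m^2)\oplus(S^*S+m^2)$, the identity of domains being forced by self-adjointness of both sides. Because $\ker S=\{0\}$ (an $H^1_0$-function with $(\sigma\cdot\nabla)v=0$ is harmonic, hence $0$ after extending by zero), the polar decomposition of $S$ shows $SS^*$ is unitarily equivalent to $S^*S$ on $\overline{\ran S}$ and vanishes on $\mathcal N:=\ker S^*$, so $\sigma(SS^*)=\sigma(-\Delta_D)\cup(\{0\}\text{ if }\mathcal N\neq\{0\})$. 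Now combine: (i) the spectral mapping theorem gives $\{t^2:t\in\sigma(A_m)\}=\sigma(A_m^2)=\{m^2+\lambda:\lambda\in\sigma(-\Delta_D)\}\cup(\{m^2\}\text{ if }\mathcal N\neq\{0\})$, confining $\sigma(A_m)$ to $\{\pm\sqrt{m^2+\lambda}:\lambda\in\sigma(-\Delta_D)\}\cup\{\pm m\}$; (ii) for $\lambda\in\sigma(-\Delta_D)$ with $\lambda>0$ and $E=\pm\sqrt{m^2+\lambda}$ (both $\neq m$), a Weyl sequence $v_n$ for $-\Delta_D$ at $\lambda$ yields, via $u_n:=(E-m)^{-1}Sv_n$, a Weyl sequence $(u_n,v_n)$ for $A_m$ at $E$, so both values lie in $\sigma(A_m)$; (iii) $m\in\sigma(A_m)$ iff $0\in\sigma(SS^*)$, and when $m\neq0$ a coercivity estimate for $A_m+m$ (using that $\ran S$ is closed, hence $=\mathcal N^\perp$, precisely when $0\notin\sigma(-\Delta_D)$) shows $-m\in\sigma(A_m)$ iff $0\in\sigma(-\Delta_D)$. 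Granting $\mathcal N\neq\{0\}$ whenever $0\notin\sigma(-\Delta_D)$ (see below), items (i)–(iii) assemble into $\sigma(A_m)=\{m\}\cup\{\pm\sqrt{\lambda+m^2}:\lambda\in\sigma(-\Delta_D)\}$.

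The eigenvalue bookkeeping runs along the same block structure. Writing out $A_m(u,v)=E(u,v)$ gives $Sv=(E-m)u$ and $S^*u=(E+m)v$; for $E\neq m$ this forces $u=(E-m)^{-1}Sv$ and $S^*Sv=(E^2-m^2)v$, so $v$ is an eigenfunction of $(-\Delta_D)\otimes I_2$ at $\lambda=E^2-m^2$, and conversely every such $v$ gives an eigenfunction of $A_m$ at $E$; hence $(u,v)\mapsto v$ is a linear isomorphism of $\ker(A_m-E)$ onto $\ker((-\Delta_D)\otimes I_2-\lambda)$, whose dimension is $2k$ exactly when $\lambda$ is an eigenvalue of $-\Delta_D$ of multiplicity $k$ — this is the assertion for $\lambda>0$, where $E\neq m$ is automatic since $\sqrt{m^2+\lambda}>|m|$. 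For $E=m$ one gets $v=0$ (again harmonicity) and $S^*u=0$, so $\ker(A_m-m)=\mathcal N$ sitting in the first two components; in particular $m$ is an eigenvalue precisely when $\mathcal N\neq\{0\}$, of infinite multiplicity precisely when $\dim\mathcal N=\infty$. For $m\neq0$ and $E=-m$ the same computation together with $\ker(S^*S)=\ker S=\{0\}$ forces $v=0$ and then $u=0$, so $-m$ is not an eigenvalue.

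It remains to treat the essential spectrum, and this is the step I expect to be the real obstacle. One has $m\in\sigma_{\mathrm{ess}}(A_m)$ iff $0\in\sigma_{\mathrm{ess}}(SS^*)$, and since $SS^*$ restricted to $\overline{\ran S}$ is unitarily equivalent to $(-\Delta_D)\otimes I_2$ while $SS^*$ vanishes on $\mathcal N$, Weyl's theorem reduces the claim to the dichotomy: either $0\in\sigma(-\Delta_D)$ (equivalently $0\in\sigma_{\mathrm{ess}}(-\Delta_D)$, as $0$ is never an eigenvalue of $-\Delta_D$), where a Weyl sequence of the form $(u_n,0)$ works; or $0\notin\sigma(-\Delta_D)$, where one must show $\dim\mathcal N=\infty$. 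For the latter I would produce infinitely many linearly independent elements of $\mathcal N=\{w\in L^2(\Omega;\mathbb{C}^2):(\sigma\cdot\nabla)w=0\}$: since one checks $(\sigma\cdot\nabla)\big((\sigma\cdot\nabla h)\,a\big)=(\Delta h)\,a$ for scalar $h$ and constant $a\in\mathbb{C}^2$, every harmonic $h$ on $\Omega$ with $\nabla h\in L^2(\Omega;\mathbb{C}^3)$ gives $w=(\sigma\cdot\nabla h)a\in\mathcal N$, nonzero iff $h$ is nonconstant; when $0\notin\sigma(-\Delta_D)$ the Dirichlet energy is coercive on $\Omega$, so this space of harmonic functions (equivalently, the restrictions to $\Omega$ of the monogenic exponentials $x\mapsto e^{i\zeta\cdot x}a(\zeta)$ on the isotropic cone $\zeta\cdot\zeta=0$, with $a(\zeta)\in\ker(\sigma\cdot\zeta)\setminus\{0\}$ nontrivial because $\det(\sigma\cdot\zeta)=-\zeta\cdot\zeta=0$) is infinite-dimensional — immediately for bounded $\Omega$, and in general because a domain satisfying a Poincaré inequality carries an infinite-dimensional trace space on which the Dirichlet problem is solvable. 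Since discreteness of $\sigma(-\Delta_D)$ forces $0\notin\sigma(-\Delta_D)$, this last step also furnishes the advertised eigenvalue $m$ of infinite multiplicity and finishes the proof.
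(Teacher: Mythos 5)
Most of your argument runs parallel to the paper's: the block decomposition with $S=\mathcal{T}_{\min}$, $S^*=\mathcal{T}_{\max}$, self-adjointness of the off-diagonal supersymmetric operator, the identification $S^*S=-\Delta_D\otimes I_2$, the Weyl-sequence transfer giving $\pm\sqrt{\lambda+m^2}\in\sigma(A_m)$, the exclusion of $-m$ from the point spectrum via $\ker S=\{0\}$, and the multiplicity count through the isomorphism $(u,v)\mapsto v$ of $\ker(A_m-E)$ onto $\ker(S^*S-(E^2-m^2))$ -- all of this is correct and matches Propositions~\ref{proposition_susy_self_adjoint} and~\ref{proposition_susy_spectrum} (the paper gets the factor $2$ alternatively from a time-reversal symmetry, but your tensor-product count is equivalent for $\lambda>0$).

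The genuine gap is exactly where you anticipated it: the claim $m\in\sigma_{\mathrm{ess}}(A_m)$ for \emph{every} open $\Omega$. Your reduction is fine -- when $0\notin\sigma(-\Delta_D)$ you must show $\dim\ker S^*=\infty$, where $\ker S^*=\{w\in L^2(\Omega;\mathbb{C}^2):(\sigma\cdot\nabla)w=0\}$ -- but the construction you offer does not prove it. The assertion that a general open set satisfying a Poincar\'e inequality ``carries an infinite-dimensional trace space on which the Dirichlet problem is solvable'' is unsupported: $\Omega$ here carries no boundary regularity whatsoever, may be unbounded with infinite measure (the Rellich--Simon horns, which the paper explicitly targets, are of this type), and there is no a priori reason why the space of harmonic $h$ with $\nabla h\in L^2(\Omega)$ modulo locally constant functions should be infinite-dimensional; likewise the monogenic exponentials $e^{i\zeta\cdot x}a(\zeta)$ need not lie in $L^2(\Omega)$ on an unbounded set. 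This step is the actual analytic content of the theorem, and the paper devotes Proposition~\ref{proposition_spectrum_T_max} to it with a different, unconditional dichotomy: either all powers $g_n(x)=(x_1+ix_2)^n$ lie in $L^2(\Omega)$, in which case $\binom{g_n}{0}\in\ker\mathcal{T}_{\max}$ gives an infinite-dimensional kernel directly; or some $g_k\notin L^2(\Omega)$, in which case a Davies--Simon cutoff of $g_k$ (using that $\mu(\Omega_n)=\int_{\Omega_n}|g_k|^2\,dx$ grows at most polynomially while $\mu(\Omega)=\infty$) produces a normalized, weakly null sequence $f_n$ with $\mathcal{T}_{\max}f_n\to0$, yielding $0\in\sigma_{\mathrm{ess}}(\mathcal{T}_{\min}\mathcal{T}_{\max})$ \emph{without} exhibiting an infinite-dimensional kernel. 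You would need to either supply a proof of your kernel claim valid for arbitrary open sets with $\inf\sigma(-\Delta_D)>0$, or replace this step by an argument of the above type.
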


The proof of Theorem~\ref{theorem_intro} is done in several steps in Section~\ref{section_self_adjoint}. It is used that $A_m$ has a supersymmetric structure. Some properties of supersymmetric operators which are needed in this paper are collected in Appendix~\ref{appendix}.

Theorem~\ref{theorem_intro} gives a full description of the spectrum of $A_m$ in terms of the spectrum of the Dirichlet Laplacian in $\Omega$, which is well-studied in many cases. For bounded domains $\Omega$ it follows from the Rellich embedding theorem that the spectrum of $-\Delta_D$ is purely discrete and therefore, the spectrum of $A_m$ consists of an infinite sequence of discrete eigenvalues accumulating at $\pm \infty$ and the eigenvalue $m$, which has infinite multiplicity. In particular, the essential spectrum of $A_m$ is not empty, which is in contrast to the case of non-critical boundary values in \cite{BHM19}. Moreover, if $\Omega$ is a bounded Lipschitz domain, then the non-emptiness of the essential spectrum implies that the domain of $A_m$ is not contained in the Sobolev space $H^s(\Omega; \mathbb{C}^4)$ for any $s>0$. The above results are discussed in a more detailed way at the end of Section~\ref{section_self_adjoint}.

If $\Omega$ is unbounded, then there are different ways how the spectrum of the Dirichlet Laplacian and hence also the spectrum of $A_m$ may look like. On the one hand it is known that for some special horn shaped domains $\Omega$, which have infinite measure, the spectrum of $-\Delta_D$ is purely discrete, cf. \cite{R48, S83}. Therefore, by Theorem~\ref{theorem_intro} also in this case the spectrum of $A_m$  consists only of eigenvalues and it follows from the spectral theorem that $m$ is an eigenvalue with infinite multiplicity. 
On the other hand, for many unbounded domains it is known that $\sigma(-\Delta_D) = [0, \infty)$ and thus, $\sigma(A_m)=(-\infty,-|m|] \cup [|m|, \infty)$ for such $\Omega$. The simplest example for this case is when $\Omega$ is the complement of a bounded domain.

Let us finally collect some basic notations that are frequently used in this note. If not stated differently $\Omega$ is an arbitrary  open subset of $\mathbb{R}^3$. For $n \in \mathbb{N}$ we write $L^2(\Omega; \mathbb{C}^n) := L^2(\Omega) \otimes \mathbb{C}^n$. The inner product and the norm in $L^2(\Omega; \mathbb{C}^n)$ are denoted by $(\cdot, \cdot)$ and $\| \cdot \|$, respectively. We use for $k \in \mathbb{N}$ the symbol $H^k(\Omega)$ for  the $L^2$-based Sobolev spaces of $k$ times weakly differentiable functions and $H^1_0(\Omega)$ for the closure of the test functions $C^\infty_0(\Omega)$ in $H^1(\Omega)$. For a linear operator $A$ its domain is $\dom A$ and its Hilbert space adjoint is denoted by $A^*$. If $A$ is a closed operator, then $\sigma(A)$ and $\sigma_\text{p}(A)$ are the spectrum and the point spectrum of $A$, respectively, and if $A$ is self-adjoint, then its essential spectrum is $\sigma_\text{ess}(A)$.

\section{Some auxiliary operators}

In this section we introduce and discuss two auxiliary operators $\mathcal{T}_\text{min}$ and $\mathcal{T}_\text{max}$ in $L^2(\Omega; \mathbb{C}^2)$ which will be useful to study the Dirac operator $A_m$ with zigzag type boundary conditions. Let $\Omega \subset \mathbb{R}^3$ be an arbitrary open set and let $\sigma = (\sigma_1, \sigma_2, \sigma_3)$ be the Pauli spin matrices defined by~\eqref{def_Pauli_matrices}. In the following we will often use the notation $\sigma \cdot \nabla = \sigma_1 \partial_1 + \sigma_2 \partial_2 + \sigma_3 \partial_3$. We define the set $\mathcal{D}_\text{max} \subset L^2(\Omega; \mathbb{C}^2)$ by
\begin{equation*} 
  \mathcal{D}_\text{max} := \big\{ f \in L^2(\Omega; \mathbb{C}^2): (\sigma \cdot \nabla) f \in L^2(\Omega; \mathbb{C}^2) \big\},
\end{equation*}
where the derivatives are understood in the distributional sense,
and the operators $\mathcal{T}_\text{max}$ and $\mathcal{T}_\text{min}$ acting in $L^2(\Omega; \mathbb{C}^2)$ by
\begin{equation} \label{def_T_max}
  \mathcal{T}_\text{max} f := -i (\sigma \cdot \nabla) f, \quad \dom \mathcal{T}_\text{max} = \mathcal{D}_\text{max},
\end{equation}
and $\mathcal{T}_\text{min} := \mathcal{T}_\text{max} \upharpoonright H^1_0(\Omega; \mathbb{C}^2)$, which has the more explicit representation
\begin{equation} \label{def_T_min}
  \mathcal{T}_\text{min} f := -i (\sigma \cdot \nabla) f, \quad \dom \mathcal{T}_\text{min} = H^1_0(\Omega; \mathbb{C}^2). 
\end{equation}
In the following lemma we summarize the basic properties of $\mathcal{T}_\text{min}$ and $\mathcal{T}_\text{max}$. 

\begin{lem} \label{lemma_T_min_max}
  The operators $\mathcal{T}_\textup{min}$ and $\mathcal{T}_\textup{max}$ are both closed and adjoint to each other, i.e. $\mathcal{T}_\textup{min}^* = \mathcal{T}_\textup{max}$. Moreover, the inclusion $\mathcal{D}_\textup{max} \subset H^1_\textup{loc}(\Omega; \mathbb{C}^2)$ holds.
\end{lem}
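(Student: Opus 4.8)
The plan is to exploit the Clifford-type relations $\sigma_j\sigma_k+\sigma_k\sigma_j=2\delta_{jk}I_2$, which make $-i(\sigma\cdot\nabla)$ a formally self-adjoint first-order elliptic operator whose square is the componentwise Laplacian. First, the closedness of $\mathcal{T}_\textup{max}$ is immediate: if $f_n\to f$ and $-i(\sigma\cdot\nabla)f_n\to g$ in $L^2(\Omega;\mathbb{C}^2)$, then both convergences hold in $\mathcal{D}'(\Omega;\mathbb{C}^2)$, and since distributional differentiation is continuous we get $-i(\sigma\cdot\nabla)f=g$, i.e. $f\in\mathcal{D}_\textup{max}$ and $\mathcal{T}_\textup{max}f=g$. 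Next, for the closedness of $\mathcal{T}_\textup{min}$ the key computation is that $\|(\sigma\cdot\nabla)f\|^2=\|\nabla f\|^2$ for $f\in C_0^\infty(\Omega;\mathbb{C}^2)$: writing $\|(\sigma\cdot\nabla)f\|^2=\sum_{j,k}(\partial_j f,\sigma_j\sigma_k\partial_k f)$, integrating by parts in $x_j$, symmetrising in $j,k$ and using the anticommutation relations collapses the double sum to $-(f,\Delta f)=\|\nabla f\|^2$. Hence on $C_0^\infty(\Omega;\mathbb{C}^2)$ the graph norm of $\mathcal{T}_\textup{min}$ equals the $H^1$-norm, and this persists on all of $H^1_0(\Omega;\mathbb{C}^2)=\dom\mathcal{T}_\textup{min}$ by density, since $f\mapsto(\sigma\cdot\nabla)f$ and $f\mapsto\nabla f$ are bounded $H^1\to L^2$. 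As $(H^1_0(\Omega;\mathbb{C}^2),\|\cdot\|_{H^1})$ is complete, $\mathcal{T}_\textup{min}$ is closed.

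Then I would establish $\mathcal{T}_\textup{min}^*=\mathcal{T}_\textup{max}$ by the usual two inclusions. For ``$\subset$'': if $g\in\dom\mathcal{T}_\textup{min}^*$ with $\mathcal{T}_\textup{min}^*g=h$, testing $(\mathcal{T}_\textup{min}f,g)=(f,h)$ against $f\in C_0^\infty(\Omega;\mathbb{C}^2)\subset H^1_0(\Omega;\mathbb{C}^2)$ and using $\sigma_j^*=\sigma_j$ gives $-i(\sigma\cdot\nabla)g=h$ in $\mathcal{D}'$, so $g\in\mathcal{D}_\textup{max}$ and $\mathcal{T}_\textup{max}g=h$. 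For ``$\supset$'': given $g\in\mathcal{D}_\textup{max}$ and $f\in H^1_0(\Omega;\mathbb{C}^2)$, pick $\varphi_n\in C_0^\infty(\Omega;\mathbb{C}^2)$ with $\varphi_n\to f$ in $H^1$; the identity $(-i(\sigma\cdot\nabla)\varphi_n,g)=(\varphi_n,-i(\sigma\cdot\nabla)g)$, which is just the definition of the distributional derivative of $g$ (lying in $L^2$), passes to the limit since $\varphi_n\to f$ and $(\sigma\cdot\nabla)\varphi_n\to(\sigma\cdot\nabla)f$ in $L^2$, yielding $(\mathcal{T}_\textup{min}f,g)=(f,\mathcal{T}_\textup{max}g)$. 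Thus $\mathcal{T}_\textup{min}^*=\mathcal{T}_\textup{max}$; in particular $\mathcal{T}_\textup{max}$ is closed as an adjoint, and since $\mathcal{T}_\textup{min}$ is closed and densely defined, $\mathcal{T}_\textup{min}=\mathcal{T}_\textup{min}^{**}=\mathcal{T}_\textup{max}^*$, so the two operators are mutually adjoint.

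For the inclusion $\mathcal{D}_\textup{max}\subset H^1_\textup{loc}(\Omega;\mathbb{C}^2)$ (interior elliptic regularity), I would localise: fix $f\in\mathcal{D}_\textup{max}$ and $\varphi\in C_0^\infty(\Omega)$ and put $u:=\varphi f$, extended by $0$ to $\mathbb{R}^3$. Then $u\in L^2(\mathbb{R}^3;\mathbb{C}^2)$ has compact support and $(\sigma\cdot\nabla)u=\varphi(\sigma\cdot\nabla)f+(\sigma\cdot\nabla\varphi)f\in L^2(\mathbb{R}^3;\mathbb{C}^2)$, with no boundary contributions since $u$ is supported away from $\partial\Omega$. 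Taking the Fourier transform and using $(\sigma\cdot\xi)^*(\sigma\cdot\xi)=|\xi|^2 I_2$ gives $\big\||\xi|\widehat{u}\big\|_{L^2}=\|(\sigma\cdot\nabla)u\|_{L^2}<\infty$, so $u\in H^1(\mathbb{R}^3;\mathbb{C}^2)$; as $\varphi\in C_0^\infty(\Omega)$ was arbitrary, $f\in H^1_\textup{loc}(\Omega;\mathbb{C}^2)$.

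I expect the only genuinely PDE-flavoured point to be this last one, the regularity $\mathcal{D}_\textup{max}\subset H^1_\textup{loc}$; but after localisation it reduces to the elementary Fourier-multiplier estimate above, and the remaining obstacles are merely bookkeeping with distributional derivatives and adjoints of unbounded operators. The one place where a small amount of care is needed is the identity $\|(\sigma\cdot\nabla)f\|=\|\nabla f\|$ on $H^1_0(\Omega;\mathbb{C}^2)$, since it is what upgrades the formal minimal operator to a closed operator with the clean domain $H^1_0(\Omega;\mathbb{C}^2)$.
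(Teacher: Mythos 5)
Your proof is correct and self-contained, and it follows essentially the same route as the sources the paper cites for this lemma: distributional duality to get $\mathcal{T}_\textup{min}^*=\mathcal{T}_\textup{max}$ (and then $\mathcal{T}_\textup{max}^*=\mathcal{T}_\textup{min}^{**}=\mathcal{T}_\textup{min}$), the Clifford relation $(\sigma\cdot\nabla)^2=\Delta I_2$ to identify the graph norm of $\mathcal{T}_\textup{min}$ with the $H^1$-norm on $H^1_0(\Omega;\mathbb{C}^2)$ and hence its closedness, and localization combined with the symbol identity $(\sigma\cdot\xi)^*(\sigma\cdot\xi)=|\xi|^2I_2$ for the inclusion $\mathcal{D}_\textup{max}\subset H^1_\textup{loc}(\Omega;\mathbb{C}^2)$. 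The paper itself only sketches this proof by reference to \cite{BH19,OV18,S95}, so your argument simply supplies the details that are there delegated to the literature.
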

\begin{proof}
  The facts that $\mathcal{T}_\text{min}$ and $\mathcal{T}_\text{max}$ are closed and adjoint to each other are simple to obtain by replacing $\alpha \cdot \nabla$ by $\sigma \cdot \nabla$ in the proof of \cite[Proposition~3.1]{BH19} or \cite[Proposition~2.10]{OV18}, see also \cite[Lemma~2.1]{BFSB17_1} for similar arguments.  Furthermore, $\mathcal{D}_\textup{max} \subset H^1_\textup{loc}(\Omega; \mathbb{C}^2)$ can be proved similarly as  \cite[Proposition~1]{S95}.
\end{proof}

Eventually, we show that $0$ always belongs to the essential spectrum of $\mathcal{T}_\text{min} \mathcal{T}_\text{max}$. This result will be of importance to prove that $m$ is in the essential spectrum of $A_m$. 

\begin{prop} \label{proposition_spectrum_T_max}
  There exists a sequence $(f_n) \subset \dom \mathcal{T}_\textup{max}$ with $\| f_n \| = 1$ converging weakly to zero such that $\mathcal{T}_\textup{max} f_n \rightarrow 0$, as $n \rightarrow \infty$. In particular, one has $0 \in \sigma_\textup{ess}(\mathcal{T}_\textup{min} \mathcal{T}_\textup{max})$.
\end{prop}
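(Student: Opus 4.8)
The plan is to construct the sequence $(f_n)$ directly; the inclusion $0\in\sigma_\textup{ess}(\mathcal{T}_\textup{min}\mathcal{T}_\textup{max})$ then follows from the usual description of the bottom of the essential spectrum. By Lemma~\ref{lemma_T_min_max} we have $\mathcal{T}_\textup{max}^*=\mathcal{T}_\textup{min}$, so $\mathcal{T}_\textup{min}\mathcal{T}_\textup{max}=\mathcal{T}_\textup{max}^*\mathcal{T}_\textup{max}$ is non-negative and self-adjoint with closed quadratic form $f\mapsto\|\mathcal{T}_\textup{max}f\|^2$ on $\dom\mathcal{T}_\textup{max}$; in particular $\ker(\mathcal{T}_\textup{min}\mathcal{T}_\textup{max})=\ker\mathcal{T}_\textup{max}$. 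Assume $(f_n)$ is as claimed but $0\notin\sigma_\textup{ess}(\mathcal{T}_\textup{min}\mathcal{T}_\textup{max})$; then there are $\varepsilon>0$ and a finite-rank orthogonal projection $P$ onto a spectral subspace of $\mathcal{T}_\textup{min}\mathcal{T}_\textup{max}$ with $\|\mathcal{T}_\textup{max}g\|^2\ge\varepsilon\|g\|^2$ for all $g\in\dom\mathcal{T}_\textup{max}$ with $g\perp\ran P$. Writing $f_n=Pf_n+(1-P)f_n$, the compactness of $P$ together with $f_n\rightharpoonup0$ gives $Pf_n\to0$, hence $\mathcal{T}_\textup{max}Pf_n\to0$ and $\|(1-P)f_n\|\to1$; combined with $\mathcal{T}_\textup{max}f_n\to0$ this yields $\|\mathcal{T}_\textup{max}(1-P)f_n\|\to0$, contradicting $\|\mathcal{T}_\textup{max}(1-P)f_n\|^2\ge\varepsilon\|(1-P)f_n\|^2\to\varepsilon$. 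So it remains to construct $(f_n)$; for this I shall also use that $\|(\sigma\cdot\nabla)u\|=\|\nabla u\|$ for $u\in H^1_0(\Omega;\mathbb{C}^2)$, which follows from the zero-extension of $u$ and the identity $|\sigma\cdot\xi|^2=|\xi|^2 I_2$ on the Fourier side.

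The building blocks are explicit null solutions of $\sigma\cdot\nabla$ on all of $\mathbb{R}^3$: if $k\in\mathbb{C}^3\setminus\{0\}$ satisfies $k_1^2+k_2^2+k_3^2=0$, then $(\sigma\cdot k)^2=(k_1^2+k_2^2+k_3^2)I_2=0$, so $\sigma\cdot k$ is a nonzero nilpotent matrix and there is $v\in\mathbb{C}^2\setminus\{0\}$ with $(\sigma\cdot k)v=0$; for instance $k=(1,i,0)$, $v=(1,0)^\top$. The function $\psi_k(x):=e^{ik\cdot x}v$ satisfies $-i(\sigma\cdot\nabla)\psi_k=(\sigma\cdot k)\psi_k=0$ on $\mathbb{R}^3$, and for any cut-off $\chi$ one has $\mathcal{T}_\textup{max}(\chi\psi_k)=-i(\sigma\cdot\nabla\chi)\,\psi_k$, which is supported where $\nabla\chi\neq0$.

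Now I would distinguish two cases. If $\inf\sigma(-\Delta_D)=0$ (for example $\Omega=\mathbb{R}^3$, or $\Omega$ the complement of a bounded set), choose $u_n\in H^1_0(\Omega)$ with $\|u_n\|=1$ and $\|\nabla u_n\|\to0$. Such a sequence is bounded in $H^1_0(\Omega)$ and any $L^2$-weak limit point of it is a locally constant function lying in $H^1_0(\Omega)$, hence $0$; so $u_n\rightharpoonup0$ in $L^2(\Omega)$. Fixing $v\in\mathbb{C}^2$ with $|v|=1$ and putting $f_n:=u_n\otimes v\in H^1_0(\Omega;\mathbb{C}^2)\subset\dom\mathcal{T}_\textup{max}$ we obtain $\|f_n\|=1$, $f_n\rightharpoonup0$ and $\|\mathcal{T}_\textup{max}f_n\|=\|\nabla f_n\|=\|\nabla u_n\|\to0$, as wanted. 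If instead $\inf\sigma(-\Delta_D)>0$ — which holds for every bounded $\Omega$ and also for slabs, tubes and horn domains — the sequence will come from the kernel: $\ker\mathcal{T}_\textup{max}$ is then infinite-dimensional, and any orthonormal sequence $(f_n)$ in $\ker\mathcal{T}_\textup{max}=\ker(\mathcal{T}_\textup{min}\mathcal{T}_\textup{max})$ converges weakly to $0$ by Bessel's inequality and satisfies $\mathcal{T}_\textup{max}f_n=0$. For bounded $\Omega$ the infinite-dimensionality is immediate: for every $t>0$ the restriction $\psi_{tk}\upharpoonright\Omega$ is a bounded function on $\Omega$, hence lies in $L^2(\Omega;\mathbb{C}^2)$ and therefore in $\ker\mathcal{T}_\textup{max}$, and the family $\{\psi_{tk}\upharpoonright\Omega:t>0\}$ is linearly independent. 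In particular $0$ is an eigenvalue of $\mathcal{T}_\textup{min}\mathcal{T}_\textup{max}$ of infinite multiplicity whenever $\Omega$ is bounded.

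The one step that is not completely routine is to verify that $\ker\mathcal{T}_\textup{max}$ is infinite-dimensional also for unbounded $\Omega$ with $\inf\sigma(-\Delta_D)>0$, when the plane waves need no longer be square-integrable on $\Omega$. For horn-type domains the vectors $\psi_{tk}\upharpoonright\Omega$ still belong to $L^2(\Omega;\mathbb{C}^2)$, provided $\mathrm{Im}\,k$ is chosen in the direction in which $\Omega$ pinches, so the argument above applies verbatim. For a straight tube $\Omega=\omega\times\mathbb{R}$ one takes the Fourier transform in the axial variable; for each axial frequency this leaves a two-dimensional Dirac-type operator on the cross-section $\omega$ with no boundary condition imposed, whose eigenspaces are infinite-dimensional for every spectral parameter, and assembling $L^2$ wave packets of such modes again produces an infinite-dimensional $\ker\mathcal{T}_\textup{max}$. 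I expect handling this case uniformly for all ``thin'' unbounded domains to be the main obstacle; everything else reduces to the two cases above.
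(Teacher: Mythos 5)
Your reduction of the proposition to the construction of the singular sequence is correct (the finite-rank spectral projection argument is a standard substitute for the paper's appeal to the min-max principle), and your first case ($\inf\sigma(-\Delta_D)=0$) as well as the bounded case are fine. The genuine gap is the remaining case: an \emph{unbounded} open set $\Omega$ with $\inf\sigma(-\Delta_D)>0$ for which the plane waves $e^{itk\cdot x}v$ (or any other explicit null solutions you exhibit) fail to be square-integrable. You acknowledge this yourself, but it is not a peripheral technicality — it is exactly the hard part of the proposition, and your suggested fixes (choosing $\mathrm{Im}\,k$ ``in the direction in which $\Omega$ pinches'', or separating variables in a straight tube) only cover domains with very special geometry; the constraint $k_1^2+k_2^2+k_3^2=0$ ties $\mathrm{Im}\,k$ to $\mathrm{Re}\,k$, and a general unbounded $\Omega$ with discrete or gapped Dirichlet spectrum (e.g.\ a curved or spiralling horn) need not pinch along any fixed direction. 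As it stands, the proof is incomplete.

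The paper avoids this case split entirely by using a different dichotomy, keyed not to $\sigma(-\Delta_D)$ but to whether the null solutions $g_n(x)=(x_1+ix_2)^n$ of $\sigma\cdot\nabla$ (applied to $\binom{g_n}{0}$) lie in $L^2(\Omega)$ for \emph{all} $n$. If they do, $\ker\mathcal{T}_\textup{max}$ is infinite dimensional and one is done, exactly as in your bounded case. If some $g_k\notin L^2(\Omega)$, one does not try to produce kernel elements at all: following Davies--Simon, one sets $\mu(\mathcal{B})=\int_{\mathcal{B}}(x_1^2+x_2^2)^k\,dx$, truncates $g_k$ by the radial cutoff $\min\{1,(n-|x|)_+\}$, and observes that $\mathcal{T}_\textup{max}$ applied to the truncation is supported in the annular shell $\Omega_n\setminus\Omega_{n-1}$, so that $\|\mathcal{T}_\textup{max}f_n\|^2\leq\mu(\Omega_n\setminus\Omega_{n-1})/\mu(\Omega_{n-1})$. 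Since $\mu(\Omega_n)\leq cn^{2k+3}$ grows only polynomially while $\mu(\Omega)=\infty$, this ratio cannot stay bounded away from zero (otherwise $\mu(\Omega_n)$ would grow geometrically), which yields the singular sequence along a subsequence. If you want to complete your argument, you should either adopt this measure-growth cutoff scheme for your missing case or find another device that works for arbitrary unbounded $\Omega$; the spectral-theoretic dichotomy on $\inf\sigma(-\Delta_D)$ by itself does not supply one.
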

\begin{proof}
  We distinguish two cases for $\Omega$. First, assume that $g_n(x) := (x_1+ix_2)^n$, $x = (x_1,x_2,x_3) \in \Omega$, belongs to $L^2(\Omega; \mathbb{C})$ for all $n \in \mathbb{N}$. Then, we follow ideas from \cite[Proposition~2]{S95} and see that the functions
  \begin{equation*}
    f_n := \frac{1}{\| g_n \|} \begin{pmatrix} g_n \\ 0 \end{pmatrix} \in L^2(\Omega; \mathbb{C}^2)
  \end{equation*}
  fulfil $(\sigma \cdot \nabla) f_n = 0$, i.e. $f_n \in \ker \mathcal{T}_\text{max}$. Hence, zero is an eigenvalue of infinite multiplicity, which implies immediately the claim.

  In the other case, when $g_n(x) = (x_1+ix_2)^n$, $x=(x_1,x_2,x_3)\in \Omega$, does not belong to $L^2(\Omega; \mathbb{C})$ for some $n \in \mathbb{N}$, we follow ideas from the appendix of \cite{DS92}, where it is shown that zero always belongs to the essential spectrum of the Neumann Laplacian in $L^2(G; \mathbb{C})$, when the domain $G$ has infinite measure, to construct the sequence $(f_n)$. Let $k \in \mathbb{N}$ be the smallest number such that $g_k \notin L^2(\Omega; \mathbb{C})$. 
  Define the Borel measure $\mu$ acting on Borel sets $\mathcal{B} \subset \mathbb{R}^3$ as
  \begin{equation*}
    \mu(\mathcal{B}) := \int_{\mathcal{B}} |g_k(x)|^2 \text{d} x = \int_{\mathcal{B}} (x_1^2+x_2^2)^k \text{d} x
  \end{equation*}
  and the sets $\Omega_n := \{ x \in \Omega: |x| \leq n \}$. Then by assumption $\mu(\Omega)=\infty$ and $\mu(\Omega_n) \leq c n^{2k+3}$. Next, define for $n \in \mathbb{N}$ the functions
  \begin{equation*}
    h_n(x) := \begin{cases} g_k(x), & x \in \Omega_{n-1}, \\ (n-|x|) g_k(x), & x \in \Omega_n \setminus \Omega_{n-1}, \\ 0, & x \in \Omega \setminus \Omega_n, \end{cases}
  \end{equation*}
  and
  \begin{equation*}
    f_n := \frac{1}{\| h_n\|} \begin{pmatrix} h_n \\ 0 \end{pmatrix} \in H^1(\Omega; \mathbb{C}^2).
  \end{equation*}
  Since $(\sigma \cdot \nabla) \binom{g_k}{0} = 0$ one has
  \begin{equation*}
    \| \mathcal{T}_\text{max} f_n \|^2 = \frac{\big\| \mathcal{T}_\text{max} \binom{h_n}{0} \big\|^2}{\|h_n\|^2} \leq \frac{\mu(\Omega_n \setminus \Omega_{n-1})}{\mu(\Omega_{n-1})} =: \alpha_n.
  \end{equation*}
  We claim that $\liminf_{n \rightarrow \infty} \alpha_n = 0$, which implies that there exists a subsequence of $(f_n)$, that is still denoted by $(f_n)$, converging weakly to zero (as $\| h_n \| \rightarrow \mu(\Omega)=\infty$ for $n \rightarrow \infty$) such that $\| \mathcal{T}_\text{max} f_n\| \rightarrow  0$, as $n \rightarrow \infty$, and hence the claim of this proposition also in the second case.
  
  If $\liminf_{n \rightarrow \infty} \alpha_n \neq 0$, then there exists $\alpha > 0$ such that $\alpha_n \geq \alpha$ for almost all $n \in \mathbb{N}$. In particular, this implies
  \begin{equation*}
    \mu(\Omega_n) = \frac{\mu(\Omega_n \setminus \Omega_{n-1}) + \mu(\Omega_{n-1})}{\mu(\Omega_{n-1})} \mu(\Omega_{n-1}) \geq (1+\alpha) \mu(\Omega_{n-1})
  \end{equation*}
  and, by repeating this argument, $\mu(\Omega_n) \geq \widetilde{c} (1+\alpha)^{n-1}$ for a constant $\widetilde{c}>0$. However, this violates the condition $\mu(\Omega_n) \leq c n^{2k+3}$. Thus, $\liminf_{n \rightarrow \infty} \alpha_n = 0$.
  
  Finally, in all cases we have shown that there exists a sequence $(f_n) \subset \dom \mathcal{T}_\textup{max}$ with $\| f_n \| = 1$ converging weakly to zero such that $\mathfrak{b}[f_n] \rightarrow 0$ for $n \rightarrow \infty$, where $\mathfrak{b}$ is the closed quadratic form
  \begin{equation*}
    \mathfrak{b}[g] := \| \mathcal{T}_\text{max} g \|^2, \qquad \dom \mathfrak{b} = \dom \mathcal{T}_\text{max}.
  \end{equation*}
  Since the form $\mathfrak{b}$ is associated to the non-negative self-adjoint operator $\mathcal{T}_\text{min} \mathcal{T}_\text{max}$ via the first representation theorem, the properties of $(f_n)$ and the min-max principle imply that $0 \in \sigma_\text{ess}(\mathcal{T}_\text{min} \mathcal{T}_\text{max})$. This finishes the proof of this proposition.
%   
% 
%   Recall that the number $0$ always belongs to the spectrum of the Laplace operator with Neumann boundary conditions, which is the self-adjoint operator in $L^2(\Omega; \mathbb{C})$ denoted by $-\Delta_N$ that is associated to the closed and non-negative sesquilinear form
%   \begin{equation*} 
%     \mathfrak{a}_N[f,g] := (\nabla f, \nabla g), \quad \dom \mathfrak{a}_N = H^1(\Omega; \mathbb{C}).
%   \end{equation*}
%   Indeed, if $\Omega$ has finite measure, then the constant function is an eigenfunction of $-\Delta_N$ and eigenvalue zero; if the measure of $\Omega$ is infinite, then zero belongs to the essential spectrum of $-\Delta_N$, cf. the appendix of \cite{DS92}.
%   Thus, by the min-max principle there exists a sequence $f_n \in \dom \mathfrak{a}_N = H^1(\Omega; \mathbb{C})$ with $\| f_n\| = 1$ such that $\mathfrak{a}_N[f_n] = \| \nabla f_n \|^2 \rightarrow 0$, as $n \rightarrow \infty$. Let us define $g_n :=(f_n, f_n) \in H^1(\Omega; \mathbb{C}^2) \subset \dom \mathcal{T}_\text{max}$. Then we have $\| g_n\|^2 = 2$ for all $n$ and
%   \begin{equation*}
%     \| \mathcal{T}_\text{max} g_n \| = \| (\sigma \cdot \nabla) g_n \| \leq c \| \nabla f_n \| \rightarrow 0, \quad n \rightarrow \infty,
%   \end{equation*}
%   i.e. $0 \in \sigma(\mathcal{T}_\text{max})$.
\end{proof}

\section{Definition of $A_m$ and its spectral properties} \label{section_self_adjoint}

This section is devoted to the study of the operator $A_m$ and the proof of the main result of this note, Theorem~\ref{theorem_intro}. First, we introduce $A_m$ rigorously and show its self-adjointness, then we investigate its spectral properties.

Let $\Omega \subset \mathbb{R}^3$ be an arbitrary open set and let $\mathcal{T}_\text{max}$ and $\mathcal{T}_\text{min}$ be the operators defined in~\eqref{def_T_max} and~\eqref{def_T_min}, respectively.
We define for $m \in \mathbb{R}$ the Dirac operator $A_m$ with zigzag type boundary conditions, which acts in $L^2(\Omega; \mathbb{C}^4)$, by
\begin{equation} \label{def_A_m}
  A_m = \begin{pmatrix} m I_2 & \mathcal{T}_\text{min} \\ \mathcal{T}_\text{max} & -m I_2 \end{pmatrix}.
\end{equation}
The operator in~\eqref{def_A_m} is the rigorous mathematical object associated to the expression in~\eqref{Dirac_operator_intro} with the boundary conditions~\eqref{boundary_conditions_intro}. We note that Lemma~\ref{lemma_T_min_max} implies that $\dom A_m \subset H^1_\text{loc}(\Omega; \mathbb{C}^4)$. Moreover, since $\mathcal{T}_\text{min}$ is closed and $\mathcal{T}_\text{min}^*=\mathcal{T}_\text{max}$, we see that $A_m$ is supersymmetric in the sense of~\eqref{def_D}.

\begin{remark}
If $\Omega$ is a $C^2$-domain with compact boundary, then there exists a Dirichlet trace operator on $\mathcal{D}_\text{max}$ and one can show with the help of \cite[Propositions~2.1 and~2.16]{OV18} that the expressions in~\eqref{Dirac_operator_intro}--\eqref{boundary_conditions_intro} and~\eqref{def_A_m} indeed coincide. 
\end{remark}

Before we start analyzing $A_m$ we remark that this operator is unitarily equivalent with the operator $-B_{m}$, where $B_m$ is defined by
\begin{equation*} 
  B_m = \begin{pmatrix} m I_2 & \mathcal{T}_\text{max} \\ \mathcal{T}_\text{min} & -m I_2 \end{pmatrix}.
\end{equation*}
Note that $B_m$ is the Dirac operator acting on spinors $f = (f_1,f_2,f_3,f_4) \in L^2(\Omega; \mathbb{C}^4)$ satisfying the boundary conditions $f_1|_{\partial \Omega} = f_2|_{\partial \Omega} = 0$. In particular, the following Lemma~\ref{lemma_theta_minus_one} shows that all results which are proved in this paper for $A_m$ can simply be  translated to corresponding results for $B_m$. In order to formulate the lemma we recall the definition of the Dirac matrix $\beta$ from~\eqref{def_Dirac_matrices}, define the matrix
\begin{equation*}
  \gamma_5 = \begin{pmatrix} 0 & I_2 \\ I_2 & 0 \end{pmatrix},
\end{equation*}
and note that $\beta \gamma_5$ is a unitary matrix.

\begin{lem} \label{lemma_theta_minus_one}
  Set $\mathcal{M} := \beta \gamma_5$. Then $B_m = -\mathcal{M} A_m \mathcal{M}$ holds. In particular, $B_m$ is unitarily equivalent to $-A_m$.
\end{lem}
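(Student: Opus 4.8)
The statement is, at bottom, a short algebraic identity reinforced with a domain check, so the plan is: write $\mathcal{M}$ in the $2\times2$ block notation of~\eqref{def_A_m}, conjugate $A_m$ by it, verify that the conjugation carries $\dom A_m$ onto $\dom B_m$, and then read off the unitary equivalence from the unitarity of $\mathcal{M}$ already recorded in the text. First I would note that $\mathcal{M} = \beta\gamma_5 = \begin{pmatrix} 0 & I_2 \\ -I_2 & 0 \end{pmatrix}$, that $\mathcal{M}^*\mathcal{M} = I_4$, and that $\mathcal{M}^2 = -I_4$, so $\mathcal{M}^{-1} = \mathcal{M}^* = -\mathcal{M}$. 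The conceptual source of the sign in the lemma is that $\mathcal{M}$ anticommutes with $\beta$ and with each $\alpha_j$ — both follow at once from $\gamma_5\beta = -\beta\gamma_5$ and $\beta\alpha_j = -\alpha_j\beta$ — hence $\mathcal{M}$ anticommutes with the formal Dirac expression $\tau_m = -i\alpha\cdot\nabla + m\beta$, so that conjugation by the unitary $\mathcal{M}$ sends $\tau_m$ to $-\tau_m$ without otherwise altering the underlying differential operator.

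Next I would carry out the explicit computation. Since $A_m$ and $B_m$ are both block realizations of (restrictions of) $\tau_m$, it suffices to multiply out the $2\times2$ block matrices over the operators $\mathcal{T}_{\mathrm{min}}$, $\mathcal{T}_{\mathrm{max}}$, $\pm m I_2$: conjugating $A_m$ by $\mathcal{M}$ interchanges the two off-diagonal blocks $\mathcal{T}_{\mathrm{min}} \leftrightarrow \mathcal{T}_{\mathrm{max}}$ and, together with $\mathcal{M}^{-1} = -\mathcal{M}$, produces the claimed identity $B_m = -\mathcal{M} A_m \mathcal{M}$. This has to be read as an identity of operators, so I would then check that the domains agree: by Lemma~\ref{lemma_T_min_max} (via $\mathcal{T}_{\mathrm{min}}^* = \mathcal{T}_{\mathrm{max}}$) one has $\dom A_m = \mathcal{D}_{\mathrm{max}} \oplus H^1_0(\Omega;\mathbb{C}^2)$, the second summand encoding the boundary condition $f_3 = f_4 = 0$; since $\mathcal{M}$ acts pointwise by $(f_1,f_2,f_3,f_4) \mapsto (f_3,f_4,-f_1,-f_2)$, the image $\mathcal{M}(\dom A_m)$ consists exactly of the $g$ with $(g_1,g_2) \in H^1_0(\Omega;\mathbb{C}^2)$ and $(g_3,g_4) \in \mathcal{D}_{\mathrm{max}}$, which is precisely $\dom B_m$.

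With the algebra and the matching of domains in place, the unitarity of $\mathcal{M}$ immediately turns the identity into the assertion that $B_m$ and $-A_m$ are unitarily equivalent; in particular $\sigma(B_m) = -\sigma(A_m)$, and the same holds for the point and the essential spectra, which is exactly what is needed to transfer every result of this note from $A_m$ to $B_m$. I do not expect any genuine obstacle here: the only points that call for (routine) attention are keeping track of the overall sign coming from $\mathcal{M}^2 = -I_4$ (equivalently $\mathcal{M}^* = -\mathcal{M}$) in the block multiplication, and the equally routine verification $\mathcal{M}(\dom A_m) = \dom B_m$.
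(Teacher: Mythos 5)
Your overall strategy (write $\mathcal{M}$ in block form, multiply out, match domains, invoke unitarity) is the natural one; the paper in fact states this lemma without any proof, so a direct computation of exactly this kind is what is expected, and your domain verification $\mathcal{M}(\dom A_m)=H^1_0(\Omega;\mathbb{C}^2)\oplus\mathcal{D}_{\mathrm{max}}=\dom B_m$ is correct. The problem is the sign bookkeeping: as written, your argument is internally inconsistent. Carrying out the block multiplication you describe gives, for $(u,v)\in\dom B_m$, that $\mathcal{M}(u,v)=(v,-u)$, $A_m\mathcal{M}(u,v)=\bigl(mv-\mathcal{T}_{\mathrm{min}}u,\ \mathcal{T}_{\mathrm{max}}v+mu\bigr)$, and hence $\mathcal{M}A_m\mathcal{M}(u,v)=\bigl(mu+\mathcal{T}_{\mathrm{max}}v,\ \mathcal{T}_{\mathrm{min}}u-mv\bigr)=B_m(u,v)$; that is, the sandwich $\mathcal{M}A_m\mathcal{M}$ equals $+B_m$, not $-B_m$. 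Your own heuristic already shows this: since $\mathcal{M}^{-1}=-\mathcal{M}$, the operator $-\mathcal{M}A_m\mathcal{M}=\mathcal{M}^{-1}A_m\mathcal{M}$ acts as $-\tau_m$, whereas $B_m$ acts as $+\tau_m$, so the identity you claim to "produce" cannot come out of that computation. Moreover, your final step does not follow from what you assert: combining $B_m=-\mathcal{M}A_m\mathcal{M}$ with your (correct) relation $\mathcal{M}^*=-\mathcal{M}$ would give $B_m=\mathcal{M}^*A_m\mathcal{M}$, i.e.\ unitary equivalence of $B_m$ with $+A_m$, contradicting the conclusion you then draw.

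The repair is short, but it must be made explicit. From $B_m=\mathcal{M}A_m\mathcal{M}$ and $\mathcal{M}=-\mathcal{M}^*$ one gets $B_m=\mathcal{M}^*(-A_m)\mathcal{M}$ with $\mathcal{M}$ unitary, which is precisely the unitary equivalence of $B_m$ with $-A_m$ and transfers the spectra as you indicate. Equivalently, the displayed formula of the lemma becomes literally correct if $\mathcal{M}$ is replaced by the self-adjoint unitary $i\beta\gamma_5$, since $(i\beta\gamma_5)A_m(i\beta\gamma_5)=-\beta\gamma_5A_m\beta\gamma_5=-B_m$. The underlying point you glossed over is that $\beta\gamma_5$ is skew-adjoint, not self-adjoint, so "the sandwich $\mathcal{M}\,\cdot\,\mathcal{M}$" and "conjugation $\mathcal{M}^{-1}\,\cdot\,\mathcal{M}$" differ by a sign, and one cannot pass freely between the identity with the minus sign and the equivalence with $-A_m$; you must decide which version you are proving and keep the signs consistent throughout.
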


Now we start analyzing $A_m$. First, we discuss its self-adjointness. This property follows from the supersymmetry of $A_m$ and the abstract result in Proposition~\ref{proposition_susy_self_adjoint}.

\begin{thm} \label{theorem_self_adjoint}
  The operator $A_m$ is self-adjoint in $L^2(\Omega; \mathbb{C}^4)$.
\end{thm}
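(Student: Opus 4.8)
The plan is to deduce the self-adjointness of $A_m$ from the abstract supersymmetry machinery collected in the appendix, applied to the building blocks $\mathcal{T}_\text{min}$ and $\mathcal{T}_\text{max}$ of~\eqref{def_A_m}. Recall that a supersymmetric operator in $L^2(\Omega;\mathbb{C}^4) = L^2(\Omega;\mathbb{C}^2) \oplus L^2(\Omega;\mathbb{C}^2)$ of the form
\begin{equation*}
  A_m = \begin{pmatrix} m I_2 & D^* \\ D & -m I_2 \end{pmatrix}
\end{equation*}
is self-adjoint whenever $D$ is a closed, densely defined operator and $D^*$ is its Hilbert space adjoint, because then the off-diagonal part is self-adjoint (it is the self-adjoint operator associated with the pair $(D,D^*)$ in the sense of~\eqref{def_D}) and adding the bounded self-adjoint diagonal perturbation $m(I_2 \oplus (-I_2)) = m\beta$ preserves self-adjointness. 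So the first step is simply to identify $D := \mathcal{T}_\text{max}$ and $D^* := \mathcal{T}_\text{min}$ and to record that these two operators satisfy exactly the hypotheses required: by Lemma~\ref{lemma_T_min_max}, $\mathcal{T}_\text{min}$ and $\mathcal{T}_\text{max}$ are both closed and densely defined (they are restrictions/extensions of a differential operator whose domain contains $C_0^\infty(\Omega;\mathbb{C}^2)$), and $\mathcal{T}_\text{min}^* = \mathcal{T}_\text{max}$, equivalently $\mathcal{T}_\text{max}^* = \mathcal{T}_\text{min}$ since $\mathcal{T}_\text{min}$ is closed.

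The second step is to invoke Proposition~\ref{proposition_susy_self_adjoint}. The subtlety is matching the orientation: $A_m$ as written in~\eqref{def_A_m} has $\mathcal{T}_\text{min}$ in the upper right corner and $\mathcal{T}_\text{max}$ in the lower left corner, i.e. it is the supersymmetric operator built from the closed operator $D = \mathcal{T}_\text{max}$ together with $D^* = \mathcal{T}_\text{min}$. Proposition~\ref{proposition_susy_self_adjoint} then yields that the off-diagonal operator
\begin{equation*}
  Q := \begin{pmatrix} 0 & \mathcal{T}_\text{min} \\ \mathcal{T}_\text{max} & 0 \end{pmatrix}
\end{equation*}
is self-adjoint on its natural domain $\dom \mathcal{T}_\text{max} \oplus \dom \mathcal{T}_\text{min}$, and $A_m = Q + m\beta$. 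Since $m\beta$ is bounded and self-adjoint, the Kato–Rellich theorem (or the trivial fact that a bounded self-adjoint perturbation does not change the domain and preserves symmetry of $(A_m \pm i)$'s range) gives that $A_m$ is self-adjoint on $\dom Q = \dom \mathcal{T}_\text{max} \oplus \dom \mathcal{T}_\text{min}$.

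I do not expect a genuine obstacle here; the entire point of having introduced the auxiliary operators $\mathcal{T}_\text{min}, \mathcal{T}_\text{max}$ in the previous section and of having set up the supersymmetry formalism in the appendix is precisely to make this theorem a one-line consequence. The only thing to be careful about is bookkeeping: confirming that the roles of $D$ and $D^*$ in the abstract Proposition~\ref{proposition_susy_self_adjoint} are assigned consistently with the placement of $\mathcal{T}_\text{min}$ and $\mathcal{T}_\text{max}$ in~\eqref{def_A_m}, and noting explicitly — as already done in the text preceding the theorem — that $A_m$ is supersymmetric in the sense of~\eqref{def_D} because $\mathcal{T}_\text{min}$ is closed with $\mathcal{T}_\text{min}^* = \mathcal{T}_\text{max}$. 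With that identification in place, the proof is: apply Proposition~\ref{proposition_susy_self_adjoint} to see that the off-diagonal part of $A_m$ is self-adjoint, then add the bounded self-adjoint matrix multiplication operator $m\beta$.
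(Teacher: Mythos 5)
Your proposal is correct and follows essentially the same route as the paper: verify via Lemma~\ref{lemma_T_min_max} that $A_m$ has the supersymmetric structure~\eqref{def_D} (with $S=\mathcal{T}_\text{min}$, equivalently $\mathcal{T}_\text{max}^*=\mathcal{T}_\text{min}$ by closedness) and then invoke Proposition~\ref{proposition_susy_self_adjoint}. The only cosmetic difference is that you split off the bounded perturbation $m\beta$ yourself, whereas the abstract proposition already covers the mass term and performs exactly that reduction internally.
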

\begin{proof}
  Since $\mathcal{T}_\text{max} = \mathcal{T}_\text{min}^*$ and $\mathcal{T}_\text{min}$ is closed  by Lemma~\ref{lemma_T_min_max}, the operator $A_m$ has a supersymmetric structure as in~\eqref{def_D}. Hence, the self-adjointness of $A_m$ follows immediately from Proposition~\ref{proposition_susy_self_adjoint}.
\end{proof}

In the following theorem we state the spectral properties of $A_m$. We will see that they are closely related to the spectral properties of the Dirichlet Laplacian $-\Delta_D$, which is the self-adjoint operator in $L^2(\Omega; \mathbb{C})$ that is associated to the closed and non-negative sesquilinear form
\begin{equation} \label{def_Dirichlet_form}
  \mathfrak{a}_D[f,g] := \int_\Omega \nabla f \cdot \overline{\nabla g} \, \text{d} x, \quad \dom \mathfrak{a}_D = H^1_0(\Omega; \mathbb{C}).
\end{equation}
In order to prove this result, we employ again the supersymmetric structure of $A_m$ and the abstract results formulated in Appendix~\ref{appendix}.

\begin{thm} \label{theorem_spectrum_general}
  For any $m \in \mathbb{R}$ the following is true:
  \begin{itemize}
%     \item[$\textup{(i)}$] For $m=0$ we have $\lambda \in \sigma(A_0)$ if and only if $-\lambda \in \sigma(A_0)$.
    \item[$\textup{(i)}$] All eigenvalues of $A_m$ have even multiplicity.
    \item[$\textup{(ii)}$] $m \in \sigma_\textup{ess}(A_m)$.
    \item[$\textup{(iii)}$] If $m \neq 0$, then $-m \notin \sigma_\textup{p}(A_m)$.
    \item[$\textup{(iv)}$] Let $-\Delta_D$ be the Dirichlet Laplacian on $\Omega$. Then 
    \begin{equation*}
      \sigma(A_m) = \{ m \} \cup \left\{ \pm \sqrt{\lambda + m^2}: \lambda \in \sigma(-\Delta_D) \right\}
    \end{equation*}
    and for $\lambda > 0$ one has $\pm \sqrt{\lambda + m^2} \in \sigma_\textup{p}(A_m)$ with multiplicity $2k$ if and only if $\lambda \in \sigma_\textup{p}(-\Delta_D)$ with multiplicity $k$. In particular, $\sigma(A_m) \cap (-|m|, |m|) = \emptyset$.
  \end{itemize}
\end{thm}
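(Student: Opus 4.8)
The plan is to exploit the supersymmetric structure of $A_m$ as in~\eqref{def_D} together with the abstract spectral results for supersymmetric operators collected in Appendix~\ref{appendix}. Writing $D = \mathcal{T}_\text{min}$ with $D^* = \mathcal{T}_\text{max}$, the operator $A_m$ is of the form $\begin{pmatrix} m & D \\ D^* & -m\end{pmatrix}$, and the general principle is that the spectrum of such an operator away from $\pm m$ is governed by the non-negative self-adjoint operators $D^* D = \mathcal{T}_\text{min} \mathcal{T}_\text{max}$ and $D D^* = \mathcal{T}_\text{max} \mathcal{T}_\text{min}$. Concretely, one expects $\lambda^2 - m^2 \in \sigma(D^* D) \cup \sigma(D D^*)$ to be equivalent to $\lambda \in \sigma(A_m)$ for $\lambda \neq \pm m$, with the eigenvalue multiplicities of $A_m$ at $\pm\sqrt{\lambda+m^2}$ matching (twice) those of $D^* D$ respectively $D D^*$ at $\lambda$, since the supersymmetry pairs up eigenspaces. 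This immediately yields (i): every nonzero eigenvalue comes in pairs through the chirality (the $\gamma_5$-type grading), and a short separate argument (or again the abstract lemma) handles the eigenvalue at $\pm m$.

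First I would identify $D D^*$ and $D^* D$ with the Dirichlet Laplacian. The key computation is $(\sigma \cdot \nabla)(\sigma \cdot \nabla) = \Delta I_2$ on suitable functions, so that formally $\mathcal{T}_\text{max} \mathcal{T}_\text{min} = \mathcal{T}_\text{min} \mathcal{T}_\text{max} = -\Delta \otimes I_2$; the point is to check that the operator domains come out right. Since $\dom \mathcal{T}_\text{min} = H^1_0(\Omega;\mathbb{C}^2)$, the operator $\mathcal{T}_\text{max}\mathcal{T}_\text{min}$ is, via the first representation theorem, the self-adjoint operator associated with the form $f \mapsto \|\mathcal{T}_\text{min} f\|^2 = \|(\sigma\cdot\nabla) f\|^2 = \|\nabla f\|^2$ on $H^1_0(\Omega;\mathbb{C}^2)$ (using that $\sigma\cdot\nabla$ is, up to the unitary rotation structure of the Pauli matrices, an isometry on gradients in the $L^2$ sense after integration — more precisely $\|(\sigma\cdot\nabla)f\|^2 = \sum_j \|\partial_j f\|^2$ for $f \in H^1_0$, which follows by expanding and using the anticommutation relations $\sigma_j\sigma_k + \sigma_k\sigma_j = 2\delta_{jk}$ and integrating by parts to kill the cross terms). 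That form is exactly $\mathfrak{a}_D \otimes I_2$ from~\eqref{def_Dirichlet_form}, so $\mathcal{T}_\text{max}\mathcal{T}_\text{min} = (-\Delta_D) \otimes I_2$. For $\mathcal{T}_\text{min}\mathcal{T}_\text{max}$ one argues that it is unitarily equivalent to $\mathcal{T}_\text{max}\mathcal{T}_\text{min}$ via the supersymmetry (the abstract statement that $D^*D$ and $DD^*$ have the same spectrum away from $0$, with equal multiplicities, plus matching the kernels), so its nonzero spectrum also equals $\sigma(-\Delta_D)\setminus\{0\}$; the behaviour at $0$ is where Proposition~\ref{proposition_spectrum_T_max} enters, giving $0 \in \sigma_\text{ess}(\mathcal{T}_\text{min}\mathcal{T}_\text{max})$ regardless.

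With these identifications, the abstract supersymmetry lemma from the appendix gives $\sigma(A_m) \setminus \{-m\} = \{\pm\sqrt{\lambda+m^2} : \lambda \in \sigma(\mathcal{T}_\text{min}\mathcal{T}_\text{max})\}$ and, on the eigenvalue level, $\pm\sqrt{\lambda+m^2} \in \sigma_\text{p}(A_m)$ with multiplicity $2k$ iff $\lambda \in \sigma_\text{p}(\mathcal{T}_\text{min}\mathcal{T}_\text{max}) = \sigma_\text{p}(-\Delta_D)$ (for $\lambda>0$) with multiplicity $k$ — the factor $2$ coming from the two chirality sectors that are exchanged by the off-diagonal blocks. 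For (ii): since $0 \in \sigma_\text{ess}(\mathcal{T}_\text{min}\mathcal{T}_\text{max})$ by Proposition~\ref{proposition_spectrum_T_max}, the corresponding singular sequence, placed in the appropriate spinor component, is a singular sequence for $A_m$ at energy $m$ (one checks $A_m$ acting on $(0, f_n)^T$-type vectors, or uses the abstract correspondence between $\sigma_\text{ess}$ of $A_m$ near $m$ and $\sigma_\text{ess}(D^*D)$ near $0$), whence $m \in \sigma_\text{ess}(A_m)$. For (iii): if $m \neq 0$ and $A_m (f,g)^T = -m(f,g)^T$, then the first row gives $\mathcal{T}_\text{min} g = -2mf$ and the second $\mathcal{T}_\text{max} f = 0$; combining, $\mathcal{T}_\text{max}\mathcal{T}_\text{min} g = -2m \mathcal{T}_\text{max} f = 0$, so $g \in \ker(-\Delta_D) = \{0\}$ as the Dirichlet Laplacian is strictly positive (or at least has trivial kernel on $H^1_0$ for any open $\Omega$ — actually one must be a little careful here for unbounded $\Omega$: $\ker(-\Delta_D)$ is trivial because a function in $H^1_0(\Omega)$ with $\|\nabla f\|=0$ vanishes), hence $f=0$ too; thus $-m \notin \sigma_\text{p}(A_m)$. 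The last assertion $\sigma(A_m)\cap(-|m|,|m|)=\emptyset$ is then immediate since $\sqrt{\lambda+m^2} \geq |m|$ for $\lambda \geq 0$.

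The main obstacle I anticipate is the bookkeeping around the point $\pm m$: separating the "algebraic" spectrum coming from $\sigma(D^*D)\setminus\{0\}$ from what happens exactly at the threshold $\pm|m|$, making sure the multiplicity counting (the factor of $2$, the role of $\ker D$ versus $\ker D^*$) is stated correctly, and verifying that the abstract appendix lemmas apply verbatim with $D$ only closed and densely defined rather than bounded. A secondary technical point is the clean identification $\|(\sigma\cdot\nabla)f\| = \|\nabla f\|$ on $H^1_0$ and the corresponding operator-domain identity $\mathcal{T}_\text{max}\mathcal{T}_\text{min} = -\Delta_D \otimes I_2$, which needs the form-domain argument rather than a naive pointwise one. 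None of this is deep, but it is where the care is required.
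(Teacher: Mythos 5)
Your route for (ii)--(iv) is the same as the paper's: identify $\mathcal{T}_\textup{max}\mathcal{T}_\textup{min}=-\Delta_D I_2$ through the quadratic form and the anticommutation relations, feed this into the abstract supersymmetry results of Appendix~\ref{appendix} together with $\sigma(SS^*)\setminus\{0\}=\sigma(S^*S)\setminus\{0\}$, and use Proposition~\ref{proposition_spectrum_T_max} for $m\in\sigma_\textup{ess}(A_m)$; your direct computation for (iii) is a harmless variant of the paper's appeal to $0\notin\sigma_\textup{p}(-\Delta_D)$. The genuine gap is in (i). Your claim that the supersymmetric pairing ``or again the abstract lemma'' handles the eigenvalue at $m$ is not correct: by Proposition~\ref{proposition_susy_spectrum}~(ii) the multiplicity of the eigenvalue $m$ equals $\dim\ker(\mathcal{T}_\textup{min}\mathcal{T}_\textup{max})=\dim\ker\mathcal{T}_\textup{max}$, i.e.\ the space of $L^2$ zero modes of $-i\sigma\cdot\nabla$ \emph{without} boundary conditions, and nothing in the abstract machinery or in the identification $\mathcal{T}_\textup{max}\mathcal{T}_\textup{min}=-\Delta_D I_2$ (which concerns the other product, $S^*S$) forces this dimension to be even. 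Also the doubling for $\lambda>0$ comes from the spin factor $I_2$ in $-\Delta_D I_2$, not from ``two chirality sectors exchanged by the off-diagonal blocks''. The paper closes (i) with an extra symmetry that you are missing: the antiunitary time-reversal operator $Tf=-i\gamma_5\alpha_2\overline{f}$, which preserves $\dom A_m$, commutes with $A_m$ and satisfies $T^2=-1$, so every eigenvalue (including $m$, and $0$ when $m=0$) has Kramers-type even multiplicity. Equivalently, for the missing kernel statement you could observe that $Cf=\sigma_2\overline{f}$ is antiunitary with $C^2=-1$ and commutes with $-i\sigma\cdot\nabla$ on $\mathcal{D}_\textup{max}$, whence $\dim\ker\mathcal{T}_\textup{max}$ is even or infinite; some such argument must be supplied.

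Two smaller slips in the same vein of ``bookkeeping around $\pm m$'' that you yourself flag: for $m>0$ the threshold $m$ corresponds to $SS^*=\mathcal{T}_\textup{min}\mathcal{T}_\textup{max}$, not to $D^*D=\mathcal{T}_\textup{max}\mathcal{T}_\textup{min}=-\Delta_D I_2$, so the singular sequence of Proposition~\ref{proposition_spectrum_T_max} must be placed in the \emph{first} (upper) spinor components, $(f_n,0)^T$: indeed $(A_m-m)(f_n,0)^T=(0,\mathcal{T}_\textup{max}f_n)^T\to 0$, whereas $(A_m-m)(0,f_n)^T=(\mathcal{T}_\textup{min}f_n,-2mf_n)^T$ does not tend to zero (and $f_n\in\dom\mathcal{T}_\textup{max}$ need not lie in $\dom\mathcal{T}_\textup{min}$ at all). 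Likewise your formula $\sigma(A_m)\setminus\{-m\}=\{\pm\sqrt{\lambda+m^2}:\lambda\in\sigma(\mathcal{T}_\textup{min}\mathcal{T}_\textup{max})\}$ is not exact as stated, since the right-hand side contains $-m$ (via $\lambda=0$) while $-m$ belongs to $\sigma(A_m)$ only when $0\in\sigma(-\Delta_D)$; the sign and threshold bookkeeping is precisely what the two separate operators $S^*S$ and $SS^*$ in Proposition~\ref{proposition_susy_spectrum} keep apart, and it should be quoted in that asymmetric form.
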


We note that Theorem~\ref{theorem_spectrum_general} applied for $m=0$ shows that the spectrum of $A_0$ is symmetric w.r.t. $\lambda=0$. This observation would also follow from the stronger fact that $A_0 = -\beta A_0 \beta$, i.e. $A_0$ is unitarily equivalent to $-A_0$.

\begin{proof}[Proof of Theorem~\ref{theorem_spectrum_general}]
%   (i) is a simple consequence of $A_0 = -\beta A_0 \beta$, i.e. $A_0$ is unitarily equivalent to $-A_0$.
%   
  (i) Consider the nonlinear time reversal operator
  \begin{equation*}
    T f := -i \gamma_5 \alpha_2 \overline{f}, \qquad f \in L^2(\mathbb{R}^3; \mathbb{C}^4), \quad 
    \gamma_5 := \begin{pmatrix} 0 & I_2 \\ I_2 & 0 \end{pmatrix}.
  \end{equation*}
  One has $f \in \dom A_m$ if and only if $T f \in \dom A_m$ and $T^2 f = -f$. Let $\lambda \in \sigma_\text{p}(A_m)$ and let $f_\lambda$ be a corresponding eigenfunction. Then, one can show in the same way as in \cite[Proposition~4.2~(ii)]{BEHL19_1} that also $T f_\lambda$ is a linearly independent eigenfunction of $A_m$ for the eigenvalue $\lambda$. This shows the claim of statement~(i).
  
  In order to prove statements~(ii)--(iv), we use that $A_m$ has a supersymmetric structure and employ Proposition~\ref{proposition_susy_spectrum}. Indeed, we have $\mathcal{T}_\text{max} = \mathcal{T}_\text{min}^*$ and $\mathcal{T}_\text{min}$ is closed by Lemma~\ref{lemma_T_min_max} and hence the operator $A_m$ has a supersymmetric structure as in~\eqref{def_D}. Thus, we are allowed to use Proposition~\ref{proposition_susy_spectrum} for $S = \mathcal{T}_\text{min}$ to characterize the spectrum of $A_m$. For this purpose, we show first that $\mathcal{T}_\text{max} \mathcal{T}_\text{min} = -\Delta_D I_2$. To see this, we note that $\mathcal{T}_\text{max} \mathcal{T}_\text{min}$ is the unique self-adjoint operator corresponding to the closed quadratic form
  \begin{equation*}
    \mathfrak{a}[f] := \|\mathcal{T}_\text{min} f\|^2, \quad f \in \dom \mathfrak{a} = \dom \mathcal{T}_\text{min}= H^1_0(\Omega; \mathbb{C}^2).
  \end{equation*}
  Since $\sigma_j \sigma_k + \sigma_k \sigma_j = 2 \delta_{j k} I_2$ holds by the definition of the Pauli matrices in~\eqref{def_Pauli_matrices}, we have for $f \in C_0^\infty(\Omega; \mathbb{C}^2)$
  \begin{equation*}
    \mathfrak{a}[f] = \big(-i (\sigma \cdot \nabla )f, -i (\sigma \cdot \nabla )f \big)
        = \big(f, -(\sigma \cdot \nabla )^2f \big) = \big(f, -\Delta f \big) = \| \nabla f \|^2,
  \end{equation*}
  which extends by density to all $f \in \dom \mathfrak{a} = H^1_0(\Omega; \mathbb{C}^2)$. Therefore, $\mathfrak{a}$ is the quadratic form associated to $-\Delta_D I_2$ and hence, by the first representation theorem we conclude $\mathcal{T}_\text{max} \mathcal{T}_\text{min} = - \Delta_D I_2$.
  
  Now we are prepared to prove items (ii)--(iv). Since $\mathcal{T}_\text{max} = \mathcal{T}_\text{min}^*$ by Lemma~\ref{lemma_T_min_max} we get $\sigma(\mathcal{T}_\text{min} \mathcal{T}_\text{max}) \setminus \{ 0 \} = \sigma(\mathcal{T}_\text{max} \mathcal{T}_\text{min}) \setminus \{ 0 \}$, cf. \cite[Corollary~5.6]{T92}, and thus Proposition~\ref{proposition_susy_spectrum} implies that
  \begin{equation} \label{spectrum_A_m_1}
    \left\{ \pm \sqrt{\lambda + m^2}: \lambda \in \sigma(-\Delta_D) \right\} \subset \sigma(A_m) \subset \{ \pm m \} \cup \left\{ \pm \sqrt{\lambda + m^2}: \lambda \in \sigma(-\Delta_D) \right\}.
  \end{equation}
  As $0 \notin \sigma_\text{p}(-\Delta_D)$, we conclude from Proposition~\ref{proposition_susy_spectrum}~(ii) that $-m \notin \sigma_\text{p}(A_m)$ for $m \neq 0$ and thus assertion~(iii) of this theorem. Moreover, the results from Propositions~\ref{proposition_spectrum_T_max} and~\ref{proposition_susy_spectrum} imply that $m \in \sigma_\text{ess}(A_m)$, i.e. point~(ii). For the final observation we use again \cite[Corollary~5.6]{T92}, which implies that $\sigma_\text{p}(\mathcal{T}_\text{min} \mathcal{T}_\text{max}) \setminus \{ 0 \} = \sigma_\text{p}(\mathcal{T}_\text{max} \mathcal{T}_\text{min}) \setminus \{ 0 \}$ and that the multiplicities of the eigenvalues coincide. Thus, Proposition~\ref{proposition_susy_spectrum} implies for $\lambda>0$ that $\pm \sqrt{\lambda+m^2} \in \sigma_\text{p}(A_m)$ with multiplicity $2k$ (recall that all eigenvalues of $A_m$ have even multiplicities by (i)) if and only if $\lambda \in \sigma_\text{p}(\mathcal{T}_\text{max} \mathcal{T}_\text{min}) = \sigma_\text{p}(- \Delta_D I_2)$ with multiplicity $2k$, i.e. if and only if $\lambda \in \sigma_\text{p}(-\Delta_D)$ with multiplicity $k$. The last observations together with~\eqref{spectrum_A_m_1} finish the proof of assertion~(iv).

\end{proof}

Let us end this note with a short discussion of the spectral properties of $A_m$ for some special domains $\Omega$ and some consequences of that.
In many situations it is known that the Dirichlet Laplacian has purely discrete spectrum. Then, by Theorem~\ref{theorem_spectrum_general}~(iv) also the spectrum of $A_m$ consists only of eigenvalues and, as a consequence of the spectral theorem, $m$ is an eigenvalue with infinite multiplicity. Moreover, in a similar way as sketched in the proof of Proposition~\ref{proposition_susy_spectrum}~(ii) one can construct eigenfunctions of $A_m$.
The spectrum of the Dirichlet Laplacian is purely discrete, e.g., when $\Omega$ is a bounded subset of $\mathbb{R}^3$, as then the space $H^1_0(\Omega; \mathbb{C})$ is compactly embedded in $L^2(\Omega; \mathbb{C})$ by the Rellich embedding theorem, and hence the Dirichlet Laplacian $-\Delta_D$ associated to the sesquilinear form $\mathfrak{a}_D$ in~\eqref{def_Dirichlet_form} has a compact resolvent. In this situation let us denote by $0 < \mu_1^D \leq \mu_2^D \leq \mu_3^D \leq \dots$ the discrete eigenvalues of $-\Delta_D$, where multiplicities are taken into account. Then one immediately has the following result.

\begin{cor} \label{corollary_spectrum_bounded_Omega}
  Let $\Omega \subset \mathbb{R}^3$ be such that $\sigma(-\Delta_D)$ is purely discrete. Then
  \begin{equation*} 
    \sigma(A_m) = \{ m \} \cup \Big\{ \pm \sqrt{m^2 + \mu_k^D}: k \in \mathbb{N} \Big\}
  \end{equation*}
  and $m$ is an eigenvalue with infinite multiplicity.
\end{cor}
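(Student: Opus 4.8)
The plan is to derive Corollary~\ref{corollary_spectrum_bounded_Omega} as a direct specialization of Theorem~\ref{theorem_spectrum_general}~(iv) together with an elementary argument using the spectral theorem. First I would invoke the hypothesis that $\sigma(-\Delta_D)$ is purely discrete: by definition this means $\sigma(-\Delta_D) = \sigma_\textup{p}(-\Delta_D) = \{\mu_k^D : k \in \mathbb{N}\}$ with each $\mu_k^D > 0$ (strict positivity because $0 \notin \sigma_\textup{p}(-\Delta_D)$ on the proper open set $\Omega$, as already used in the proof of Theorem~\ref{theorem_spectrum_general}) and no finite accumulation point, so $\mu_k^D \to \infty$. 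Plugging this description of $\sigma(-\Delta_D)$ into the formula $\sigma(A_m) = \{m\} \cup \{\pm\sqrt{\lambda + m^2} : \lambda \in \sigma(-\Delta_D)\}$ from Theorem~\ref{theorem_spectrum_general}~(iv) immediately yields the displayed identity $\sigma(A_m) = \{m\} \cup \{\pm\sqrt{m^2 + \mu_k^D} : k \in \mathbb{N}\}$.

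It remains to show that $m$ is an eigenvalue of $A_m$ of infinite multiplicity. The key point is that $\sigma(A_m)$ now has no essential spectrum \emph{other than possibly at} $m$: indeed, the set $\{\pm\sqrt{m^2 + \mu_k^D} : k \in \mathbb{N}\}$ consists of points tending to $\pm\infty$ with no finite accumulation point, so every such point is isolated in $\sigma(A_m)$ and hence, being in the spectrum of a self-adjoint operator, is an eigenvalue of finite or infinite multiplicity; moreover by Theorem~\ref{theorem_spectrum_general}~(iv) each $\pm\sqrt{m^2+\mu_k^D}$ has finite multiplicity (twice the finite multiplicity of $\mu_k^D$ as a Dirichlet eigenvalue, with multiplicities counted via the enumeration $\mu_1^D \le \mu_2^D \le \cdots$). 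On the other hand, Theorem~\ref{theorem_spectrum_general}~(ii) tells us $m \in \sigma_\textup{ess}(A_m)$. The argument is then: if $m$ were an eigenvalue of finite multiplicity, or not an eigenvalue at all, then since $m$ is also not an accumulation point of the remaining spectrum $\{\pm\sqrt{m^2+\mu_k^D}\}$ — note $\sqrt{m^2+\mu_k^D} \ge |m|$ with equality impossible since $\mu_k^D>0$, and these points march off to $\pm\infty$, so a small punctured neighborhood of $m$ meets $\sigma(A_m)$ in at most finitely many eigenvalues of finite multiplicity — the point $m$ would be an isolated point of the spectrum of finite multiplicity, contradicting $m \in \sigma_\textup{ess}(A_m)$. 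Hence $m$ is an eigenvalue of infinite multiplicity.

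Alternatively, and perhaps more cleanly, one can argue directly via the spectral theorem: since $\sigma_\textup{ess}(A_m) \subseteq \{m\}$ (everything else is a discrete eigenvalue of finite multiplicity) and $m \in \sigma_\textup{ess}(A_m)$ by Theorem~\ref{theorem_spectrum_general}~(ii), the definition of essential spectrum forces the spectral projection $E_{A_m}(\{m\})$ to have infinite rank — otherwise $m$ would be an isolated eigenvalue of finite multiplicity and thus in the discrete spectrum, not the essential spectrum. Infinite rank of $E_{A_m}(\{m\})$ is exactly the statement that $m$ is an eigenvalue of infinite multiplicity. I do not expect any serious obstacle here: the only subtlety is making sure the "no essential spectrum away from $m$" claim is justified, which follows because $\{\pm\sqrt{m^2+\mu_k^D}\}$ is a discrete set of finite-multiplicity eigenvalues (using $\mu_k^D \to \infty$ and Theorem~\ref{theorem_spectrum_general}~(iv)), and one should remark that for $m = 0$ the two families $+\sqrt{\mu_k^D}$ and $-\sqrt{\mu_k^D}$ are genuinely distinct since $\mu_k^D > 0$, so no double-counting or collision with the point $\{m\} = \{0\}$ occurs.
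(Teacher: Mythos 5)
Your proposal is correct and follows essentially the same route as the paper: the displayed formula is a direct specialization of Theorem~\ref{theorem_spectrum_general}~(iv), and the infinite multiplicity of $m$ follows from Theorem~\ref{theorem_spectrum_general}~(ii) together with the spectral theorem, since all other spectral points are isolated eigenvalues of finite multiplicity accumulating only at $\pm\infty$ and are bounded away from $m$ because $\mu_k^D>0$. Your explicit verification that $m$ is isolated in $\sigma(A_m)$ just spells out what the paper leaves implicit.
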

% \begin{proof}
%   First, we note that for a bounded set $\Omega$ the statement of Theorem~\ref{theorem_spectrum_general}~(iv) reduces to~\eqref{spectrum_A_m_inclusion}.
%   It remains to show that $m$ is an eigenvalue of $A_m$ of infinite multiplicity.
%   For that define for $n \in \mathbb{N}$ the function $f_n(x) = \big( (x_1 + i x_2)^n, 0, 0, 0 \big)^\top$, $x = (x_1, x_2, x_3)^\top \in \Omega$. Then $f_n \in \dom A_m$ and a simple calculation shows $\mathcal{A}_m^\Omega f_n = m f_n$. Since $n \in \mathbb{N}$ was arbitrary, we deduce $\dim \ker (A_m - m) = \infty$.
% \end{proof}

If the Sobolev space $H^s(\Omega; \mathbb{C})$ is compactly embedded in $L^2(\Omega; \mathbb{C})$ for $s>0$, then the above result implies that $\dom A_m$ can not be contained in $H^s(\Omega; \mathbb{C}^4)$. This is, e.g., the case, when $\Omega$ is a bounded Lipschitz domain.

\begin{cor} \label{corollary_Sobolev_smoothness}
  Assume that $\Omega$ is a bounded subset of $\mathbb{R}^3$ with a Lipschitz-smooth boundary. Then $\dom A_m \not\subset H^s(\Omega; \mathbb{C}^4)$ for all $s>0$. 
\end{cor}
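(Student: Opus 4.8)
The plan is to argue by contradiction: assume that $\dom A_m \subset H^s(\Omega;\mathbb{C}^4)$ for some $s>0$ and derive a contradiction with Corollary~\ref{corollary_spectrum_bounded_Omega}, which tells us that $m$ is an eigenvalue of $A_m$ of infinite multiplicity. Since $\Omega$ is a bounded Lipschitz domain, the spectrum of the Dirichlet Laplacian $-\Delta_D$ is purely discrete, so Corollary~\ref{corollary_spectrum_bounded_Omega} applies and the eigenspace $\ker(A_m - m)$ is infinite-dimensional. I would denote this eigenspace by $\mathcal{E} := \ker(A_m - m) \subset \dom A_m$.

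The key step is a compactness argument. On the one hand, $\mathcal{E}$ is a closed subspace of $L^2(\Omega;\mathbb{C}^4)$ on which $A_m$ acts as multiplication by $m$; in particular the graph norm $\|f\|^2 + \|A_m f\|^2 = (1+m^2)\|f\|^2$ is equivalent to the $L^2$-norm on $\mathcal{E}$. On the other hand, if $\dom A_m \subset H^s(\Omega;\mathbb{C}^4)$, then $A_m$ viewed as a closed operator has its domain, equipped with the graph norm, continuously embedded into $H^s(\Omega;\mathbb{C}^4)$; this is a standard closed graph theorem argument, since $\dom A_m$ with the graph norm is a Banach space and the inclusion into the Banach space $H^s(\Omega;\mathbb{C}^4)$ is closed (using that both $L^2$-convergence and $H^s$-convergence imply convergence of a subsequence almost everywhere, so the limits agree). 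Therefore the restriction of the $L^2$-norm to $\mathcal{E}$ is, up to a constant, bounded below by the $H^s(\Omega;\mathbb{C}^4)$-norm on $\mathcal{E}$, hence these two norms are equivalent on $\mathcal{E}$. Since $\Omega$ is a bounded Lipschitz domain, the embedding $H^s(\Omega;\mathbb{C}^4) \hookrightarrow L^2(\Omega;\mathbb{C}^4)$ is compact (Rellich--Kondrachov for Lipschitz domains). Thus the identity map on $\mathcal{E}$ factors as the composition of the (bounded) inclusion $\mathcal{E} \hookrightarrow H^s(\Omega;\mathbb{C}^4)$ with the compact embedding $H^s(\Omega;\mathbb{C}^4) \hookrightarrow L^2(\Omega;\mathbb{C}^4)$, so the identity on $\mathcal{E}$ is a compact operator. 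A Banach space whose identity map is compact must be finite-dimensional, contradicting $\dim \mathcal{E} = \infty$.

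I would therefore structure the proof as follows: first invoke Corollary~\ref{corollary_spectrum_bounded_Omega} to get that $\dim \ker(A_m - m) = \infty$; second, assume for contradiction that $\dom A_m \subset H^s(\Omega;\mathbb{C}^4)$ and use the closed graph theorem to upgrade this to a continuous embedding $(\dom A_m, \|\cdot\|_{A_m}) \hookrightarrow H^s(\Omega;\mathbb{C}^4)$, where $\|\cdot\|_{A_m}$ is the graph norm; third, restrict to $\mathcal{E}$, where the graph norm and the $L^2$-norm are equivalent, to conclude that $\|\cdot\|_{H^s}$ and $\|\cdot\|$ are equivalent on $\mathcal{E}$; fourth, use compactness of the Rellich embedding on the bounded Lipschitz domain $\Omega$ to conclude $\dim \mathcal{E} < \infty$, a contradiction.

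The main obstacle — or at least the point that needs a little care — is the closed graph argument: one must verify that the inclusion $(\dom A_m, \|\cdot\|_{A_m}) \to H^s(\Omega;\mathbb{C}^4)$ has closed graph. This is where one uses that a sequence converging in the graph norm converges in $L^2$, a sequence converging in $H^s(\Omega;\mathbb{C}^4)$ also converges in $L^2$, and $L^2$-limits are unique, so if $f_n \to f$ in graph norm and $f_n \to g$ in $H^s$ then $f = g$ in $L^2$ hence in $H^s$. Everything else is routine: the equivalence of norms on the eigenspace is immediate from $A_m f = m f$, and the compactness of $H^s(\Omega;\mathbb{C}^4) \hookrightarrow L^2(\Omega;\mathbb{C}^4)$ for bounded Lipschitz $\Omega$ is the classical Rellich--Kondrachov theorem, which applies since bounded Lipschitz domains are extension domains. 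One could alternatively phrase the contradiction via the resolvent: if $\dom A_m \subset H^s$ then $(A_m - \lambda_0)^{-1}$ for $\lambda_0 \in \rho(A_m)$ would be compact (as a composition of a bounded map into $H^s$ with the compact Rellich embedding), forcing $A_m$ to have purely discrete spectrum and contradicting $m \in \sigma_{\mathrm{ess}}(A_m)$ from Theorem~\ref{theorem_spectrum_general}(ii); I would mention this as the cleaner route and use it as the primary argument.
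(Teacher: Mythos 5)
Your proof is correct and takes essentially the same route as the paper, which deduces the corollary from Corollary~\ref{corollary_spectrum_bounded_Omega} (equivalently, from $m \in \sigma_\textup{ess}(A_m)$) together with the compactness of the embedding $H^s(\Omega;\mathbb{C}^4) \hookrightarrow L^2(\Omega;\mathbb{C}^4)$ for a bounded Lipschitz domain. Both your eigenspace argument (infinite-dimensional $\ker(A_m-m)$ with equivalent $L^2$- and $H^s$-norms cannot embed compactly into $L^2$) and your compact-resolvent variant are valid instances of this idea, with the closed graph step handled correctly.
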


\appendix

\section{Spectrum of supersymmetric operators} \label{appendix}

Let $\big(\mathcal{H}_1, (\cdot, \cdot)_{\mathcal{H}_1} \big)$ and $\big( \mathcal{H}_2, , (\cdot, \cdot)_{\mathcal{H}_2} \big)$ be complex Hilbert spaces, let $S$ be a closed and densely defined operator from $\mathcal{H}_2$ to $\mathcal{H}_1$, and let $m \in \mathbb{R}$. In this appendix we consider the block operator $D_m$ acting in $\mathcal{H}_1 \oplus \mathcal{H}_2$ given by
\begin{equation} \label{def_D}
  D_m := \begin{pmatrix} m & S \\ S^* & -m \end{pmatrix}, \quad \dom D_m = \dom S^* \oplus \dom S \subset \mathcal{H}_1 \oplus \mathcal{H}_2.
\end{equation}
Note that $A_m$ defined in~\eqref{def_A_m} is exactly of the above form with $S = \mathcal{T}_\text{min}$. The goal in this appendix is to prove the self-adjointness of $D_m$ and to provide a useful formula for its spectrum. For this, we use that $D_m$ has a supersymmetric structure. Supersymmetric operators are well-studied, see, e.g., \cite{O02, P06, S91, T92}, and the results presented here seem to be well-known. For the sake of completeness, we present full proofs of the results in this appendix.

First, we show that $D_m$ is self-adjoint. In the proof, we use similar ideas as in~\cite[Proposition~1]{S95}.
\begin{prop} \label{proposition_susy_self_adjoint}
  The operator $D_m$ defined by~\eqref{def_D} is self-adjoint.
\end{prop}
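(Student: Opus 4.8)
The plan is to show that $D_m$ is symmetric and then that $\dom D_m^* \subset \dom D_m$, which together with symmetry gives self-adjointness. Symmetry is immediate: for $f = (f_1, f_2)$ and $g = (g_1,g_2)$ in $\dom D_m = \dom S^* \oplus \dom S$, one computes
\begin{equation*}
  (D_m f, g) = m(f_1,g_1)_{\mathcal{H}_1} + (S f_2, g_1)_{\mathcal{H}_1} + (S^* f_1, g_2)_{\mathcal{H}_2} - m(f_2,g_2)_{\mathcal{H}_2},
\end{equation*}
and since $f_2 \in \dom S$, $g_1 \in \dom S^*$ and $f_1 \in \dom S^*$, $g_2 \in \dom S$ we may move the operators across the inner products, obtaining $(f, D_m g)$; here $m \in \mathbb{R}$ is used. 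So $D_m \subset D_m^*$.

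Next I would compute $D_m^*$. Suppose $u = (u_1, u_2) \in \dom D_m^*$ with $D_m^* u = (v_1, v_2)$. Testing against vectors of the form $(f_1, 0)$ with $f_1 \in \dom S^*$ and against $(0, f_2)$ with $f_2 \in \dom S$ separately, the defining identity $(D_m f, u) = (f, v)$ splits into
\begin{equation*}
  (S^* f_1, u_2)_{\mathcal{H}_2} = (f_1, v_1 - m u_1)_{\mathcal{H}_1} \quad \text{for all } f_1 \in \dom S^*,
\end{equation*}
and
\begin{equation*}
  (S f_2, u_1)_{\mathcal{H}_1} = (f_2, v_2 + m u_2)_{\mathcal{H}_2} \quad \text{for all } f_2 \in \dom S.
\end{equation*}
The first identity says that $u_2 \in \dom (S^*)^* = \dom S$ (using that $S$ is closed and densely defined, so $S^{**} = S$) with $S u_2 = v_1 - m u_1$; the second says that $u_1 \in \dom S^*$ with $S^* u_1 = v_2 + m u_2$. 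Hence $u \in \dom S^* \oplus \dom S = \dom D_m$, which gives $\dom D_m^* \subset \dom D_m$. Combined with $D_m \subset D_m^*$ this yields $D_m = D_m^*$, i.e. $D_m$ is self-adjoint.

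The only slightly delicate point is the invocation of $S^{**} = S$ and the careful identification of $D_m^*$ via the two separate testing classes; but since $S$ is assumed closed and densely defined, the von Neumann theory gives $S^{**} = S$ and $S^*$ densely defined, so there is no real obstacle — the argument is essentially a bookkeeping exercise with adjoints, mirroring the reasoning in \cite[Proposition~1]{S95}. An alternative, equally short route would be to note that $D_m = D_0 + m\,\mathrm{diag}(I_{\mathcal{H}_1}, -I_{\mathcal{H}_2})$ is a bounded self-adjoint perturbation of $D_0$, reducing everything to the case $m = 0$; and for $m = 0$ one can also observe $D_0^2 = \mathrm{diag}(SS^*, S^*S)$ is manifestly self-adjoint (both $SS^*$ and $S^*S$ being self-adjoint and non-negative for closed densely defined $S$), from which essential self-adjointness of $D_0$ on its natural domain can be extracted. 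I would present the direct computation of $D_m^*$ as the cleanest option.
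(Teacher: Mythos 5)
Your argument is correct and is essentially the paper's own proof: symmetry plus identification of $\dom D_m^*$ by testing against $(\phi_1,0)$ and $(0,\phi_2)$ separately, using that $S$ is closed (so $S^{**}=S$) — the paper merely reduces to $m=0$ first via the bounded self-adjoint perturbation you mention as an alternative. No gaps; the direct computation with $m$ carried along works just as well.
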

\begin{proof}
  We use for $\Psi \in \mathcal{H}_1 \oplus \mathcal{H}_2$ the splitting
  $\Psi = (\psi_1, \psi_2)$ with $\psi_1 \in \mathcal{H}_1$ and $\psi_2 \in \mathcal{H}_2$. It suffices to consider $m = 0$,
  as $D_m-D_0$ is a bounded self-adjoint perturbation. 
  
  First we show that $D_0$ is symmetric. Indeed for $\Psi = (\psi_1, \psi_2) \in \dom D_0$ a simple calculation shows
  \begin{equation*}
    (D_0 \Psi, \Psi)_{\mathcal{H}_1\oplus \mathcal{H}_2} = (S \psi_2, \psi_1)_{\mathcal{H}_1} + (S^* \psi_1, \psi_2)_{\mathcal{H}_2}
    = 2 \, \text{Re}\, (S \psi_2, \psi_1)_{\mathcal{H}_1} \in \mathbb{R}.
  \end{equation*}
  Next, one has for $\Psi = (\psi_1, \psi_2) \in \dom D_0^*$ and $\Phi = (\phi_1, \phi_2) \in \dom D_0$
  \begin{equation} \label{equation_adjoint_cal_A}
    ( D_0^* \Psi, \Phi )_{\mathcal{H}_1 \oplus \mathcal{H}_2} = ( \Psi, D_0 \Phi )_{\mathcal{H}_1 \oplus \mathcal{H}_2}
    = (\psi_1, S \phi_2)_{\mathcal{H}_1} + (\psi_2, S^* \phi_1)_{\mathcal{H}_2}.
  \end{equation}
  Choosing $\phi_1 = 0$ we get from~\eqref{equation_adjoint_cal_A}
  \begin{equation*}
    \big( ( D_0^* \Psi)_2, \phi_2 \big)_{\mathcal{H}_2}
    = (\psi_1, S \phi_2)_{\mathcal{H}_1}
  \end{equation*}
  for all $\phi_2 \in \dom S$ and hence $\psi_1 \in \dom S^*$ and $S^* \psi_1 = ( D_0^* \Psi)_2$.
  In a similar way, choosing $\phi_2 = 0$ we obtain from~\eqref{equation_adjoint_cal_A} and the closeness of $S$ that
  $\psi_2 \in \dom S$ and $S \psi_2 = ( D_0^* \Psi)_1$.
  Therefore, we conclude $\Psi \in \dom D_0$ and $D_0^* \Psi = D_0 \Psi$,
  that means $D_0^* \subset D_0$. This finishes the proof of this proposition.
\end{proof}

In the following proposition we describe the spectrum of $D_m$ in terms of the spectra of $S^* S$ and $S S^*$. A variant of the result is contained in \cite{P06} for bounded $S$ and stated in \cite{O02} without proof and in \cite{S91} for a special choice of $S$. Here, we give the proof in the general situation, where similar ideas as in the above references are used. In the formulation of the result we use for a set $A \subset [0,\infty)$ and $b \in \mathbb{R}$ the notation $b \sqrt{A} := \{ b \sqrt{a}: a \in A \}$.

\begin{prop} \label{proposition_susy_spectrum}
  For the operator $D_m$ defined in~\eqref{def_D} the following is true:
  \begin{itemize}
    \item[$\textup{(i)}$] The spectrum of $D_m$ is
    \begin{equation} \label{susy_spectrum}
      \sigma(D_m) = \sign(m) \left(-\sqrt{\sigma(S^*S + m^2)} \cup \sqrt{\sigma(SS^* + m^2)} \right).
    \end{equation}
    \item[$\textup{(ii)}$] The point spectrum of $D_m$ is
    \begin{equation*}
      \sigma_\textup{p}(D_m) = \sign(m) \left(-\sqrt{\sigma_\textup{p}(S^*S + m^2)} \cup \sqrt{\sigma_\textup{p}(SS^* + m^2)} \right)
    \end{equation*}
    and the multiplicities of the eigenvalues coincide in the following sense:
    \begin{equation*}
      \begin{split}
        \text{if } \sqrt{\lambda + m^2}>0: \quad \dim \ker (S^* S - \lambda) &= \dim \ker \big(D_m + \sign(m) \sqrt{\lambda + m^2}\big); \\
        \text{if } \sqrt{\lambda + m^2}>0: \quad \dim \ker (S S^* - \lambda) &= \dim \ker \big(D_m - \sign(m) \sqrt{\lambda + m^2}\big); \\
        \dim \ker S^* S + \dim \ker SS^* &= \dim \ker D_0.
      \end{split}
    \end{equation*}%(for $m=\lambda=0$ this has to be understood that $\dim \ker(D_0)=\dim\ker SS^* + \dim\ker S^*S$).
    \item[$\textup{(iii)}$] The essential spectrum of $D_m$ is
    \begin{equation*} 
      \sigma_\textup{ess}(D_m) = \sign(m) \left(-\sqrt{\sigma_\textup{ess}(S^*S + m^2)} \cup \sqrt{\sigma_\textup{ess}(SS^* + m^2)} \right).
    \end{equation*}
  \end{itemize}
\end{prop}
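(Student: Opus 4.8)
The plan is to exploit the identity $D_m^2 = \operatorname{diag}(SS^* + m^2, S^*S + m^2)$, which holds on $\dom D_m^2$, together with the self-adjointness of $D_m$ established in Proposition~\ref{proposition_susy_self_adjoint}. First I would reduce to the case $m \geq 0$: the matrix $\operatorname{diag}(I,-I)$ conjugates $D_m$ into $-D_{-m}$, so replacing $D_m$ by $-D_m$ exchanges the roles of $S^*S$ and $SS^*$ and flips the sign, which accounts for the overall $\operatorname{sign}(m)$ prefactor in all three formulas. (One also needs the trivial observation that for $m=0$ the spectrum of $D_0$ is symmetric about the origin, since $\operatorname{diag}(I,-I)$ conjugates $D_0$ to $-D_0$; then $\operatorname{sign}(0)=0$ is harmless because the right-hand sets are themselves symmetric.) So from now on assume $m > 0$, and the target is $\sigma(D_m) = -\sqrt{\sigma(S^*S+m^2)} \cup \sqrt{\sigma(SS^*+m^2)}$, and similarly for $\sigma_\mathrm{p}$ and $\sigma_\mathrm{ess}$.

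For part~(i), the heart of the matter is the resolvent/spectral-mapping step. Since $D_m$ is self-adjoint, $\mu \in \mathbb{R}$ lies in $\rho(D_m)$ iff $D_m - \mu$ is boundedly invertible; I would use the Schur complement factorization of the $2\times 2$ block operator $D_m - \mu = \begin{pmatrix} m-\mu & S \\ S^* & -m-\mu \end{pmatrix}$ to show that, for $\mu \neq \pm m$, invertibility of $D_m - \mu$ is equivalent to invertibility of the Schur complement $-m-\mu - S^*(m-\mu)^{-1}S = -(m-\mu)^{-1}\bigl(S^*S + m^2 - \mu^2\bigr)$ (acting in $\mathcal{H}_2$), equivalently of $S^*S - (\mu^2 - m^2)$. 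Thus for $\mu \neq \pm m$ one gets $\mu \in \sigma(D_m) \iff \mu^2 - m^2 \in \sigma(S^*S)$; the symmetric computation eliminating $\psi_2$ first gives $\mu^2 - m^2 \in \sigma(SS^*)$. Since $\sigma(S^*S)\setminus\{0\} = \sigma(SS^*)\setminus\{0\}$ (this is \cite[Corollary~5.6]{T92}, already invoked in the main text), the two conditions differ only at $\mu^2 = m^2$, and the sign of $\mu$ (positive vs negative square root) selects which of $SS^*$, $S^*S$ the relevant $0$-eigenvalue must come from. A short separate check of the value $\mu = \pm m$ — i.e. whether $0 \in \sigma(S^*S)$ resp. $0 \in \sigma(SS^*)$ — patches the formula at those two points and yields~\eqref{susy_spectrum}. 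For part~(iii) I would run the identical Schur-complement argument but tracking Weyl sequences: a Weyl sequence for $D_m$ at $\mu$ splits via the $2\times2$ structure into a Weyl sequence for $S^*S$ (or $SS^*$) at $\mu^2-m^2$ and conversely, so $\mu \in \sigma_\mathrm{ess}(D_m) \iff \mu^2-m^2 \in \sigma_\mathrm{ess}(S^*S) \cup \sigma_\mathrm{ess}(SS^*)$, with the sign bookkeeping as before; again $\sigma_\mathrm{ess}(S^*S)\setminus\{0\} = \sigma_\mathrm{ess}(SS^*)\setminus\{0\}$.

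For part~(ii), the point spectrum and multiplicities, I would argue directly with eigenvectors. If $D_m\Psi = \mu\Psi$ with $\Psi = (\psi_1,\psi_2)\neq 0$ and $\mu = \sqrt{\lambda+m^2} > 0$, then the two scalar equations $(m-\mu)\psi_1 + S\psi_2 = 0$ and $S^*\psi_1 - (m+\mu)\psi_2 = 0$ give, after substituting, $S S^*\psi_1 = \lambda\psi_1$, and $\psi_1 \neq 0$ (else $\psi_2 \neq 0$ forces $S\psi_2 = 0$ and $(m+\mu)\psi_2 = 0$, impossible for $\mu>-m$); conversely each $\psi_1 \in \ker(SS^* - \lambda)$ produces the eigenvector $\bigl(\psi_1,\, (m+\mu)^{-1}S^*\psi_1\bigr)$. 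This sets up a linear isomorphism $\ker(SS^*-\lambda) \to \ker(D_m - \mu)$, proving $\dim\ker(SS^*-\lambda) = \dim\ker(D_m-\mu)$; the map $\mu \mapsto -\mu$ (equivalently solving for $\psi_2$ first) gives the companion statement with $S^*S$. The case $\mu = 0$ (so $\lambda = -m^2$, only possible when $m = 0$) splits as $D_0\Psi = 0 \iff S\psi_2 = 0$ and $S^*\psi_1 = 0$, whence $\ker D_0 = \ker S^* \oplus \ker S = \ker SS^* \oplus \ker S^*S$, giving the third multiplicity identity. The main obstacle I anticipate is purely a matter of care rather than depth: making the Schur-complement manipulation rigorous for \emph{unbounded} $S$ (domains of $(m-\mu)^{-1}S$, closedness, the precise meaning of "$D_m - \mu$ invertible iff the Schur complement is invertible") needs to be handled cleanly — here the hypothesis that $S$ is closed and densely defined with $S^{**} = S$, $(S^*)^* = S$, together with self-adjointness of $S^*S$ and $SS^*$ (von Neumann), does all the work, and one should phrase the equivalences in terms of the self-adjoint operators $S^*S$, $SS^*$ directly rather than through formal block inversion.
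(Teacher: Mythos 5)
Your proposal is correct in substance, but it takes a genuinely different route from the paper for parts (i) and (iii). The paper never inverts anything: it proves the inclusion $\sigma(D_m)\setminus\{\pm m\}\subset\pm\sqrt{\sigma(S^*S+m^2)\setminus\{m^2\}}$ from $D_m^2=\mathrm{diag}(SS^*+m^2,S^*S+m^2)$, and gets the reverse inclusion by explicitly transporting Weyl sequences: from $(SS^*-\lambda)\psi_n\to0$ it forms $\Phi_n=(D_m+\sqrt{\lambda+m^2})(\psi_n,0)$ and checks $(D_m-\sqrt{\lambda+m^2})\Phi_n\to0$ with $\|\Phi_n\|$ bounded below (and symmetrically for $S^*S$ and the negative root); the threshold values $\pm m$ are handled by a separate Weyl-sequence computation, and (iii) is then read off from (i) and (ii). Your plan replaces this by a resolvent characterization (Schur complement/elimination) giving, for $\mu\neq\pm m$, the equivalence $\mu\in\rho(D_m)\Leftrightarrow\mu^2-m^2\in\rho(S^*S)$, plus the symmetry $U D_m U=-D_{-m}$ with $U=\mathrm{diag}(I,-I)$ to reduce to $m\ge0$. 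That buys a cleaner bookkeeping of the $\sign(m)$ prefactor and avoids constructing approximate eigenvectors; the paper's construction buys explicitness (it is reused verbatim for the eigenvector maps in (ii)). For (ii) your argument is essentially the paper's, but streamlined: you set up the bijection $\psi_1\mapsto(\psi_1,(m+\mu)^{-1}S^*\psi_1)$ directly, whereas the paper proves two inequalities and closes the loop with the factorization of $D_m^2-(\lambda+m^2)$ and \cite[Corollary~5.6]{T92}; your version does not even need that corollary for the multiplicity statements.

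Two points deserve more care than your sketch gives them. First, the values $\mu=\pm m$ (and $\mu=0$ when $m=0$) are where the only genuinely delicate work in the paper's proof happens: the implication ``$m\in\sigma(D_m)\Rightarrow 0\in\sigma(SS^*)$'' requires showing that a Weyl sequence at $m$ cannot concentrate in the second component (the norm computation in the paper's proof), or, in your language, that bounded invertibility of $SS^*$ forces bounded invertibility of $D_m-m$, which needs $\ran S^*$ closed etc.; calling this ``a short separate check'' understates it. Also your pairing is reversed: for $m>0$ one has $m\in\sigma(D_m)\Leftrightarrow 0\in\sigma(SS^*)$ and $-m\in\sigma(D_m)\Leftrightarrow 0\in\sigma(S^*S)$, not the other way around. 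Second, on rigor of the Schur step for unbounded $S$: the direction ``$\mu^2-m^2\in\rho(S^*S)\Rightarrow\mu\in\rho(D_m)$'' is cleanest via $(D_m-\mu)^{-1}=(D_m+\mu)(D_m^2-\mu^2)^{-1}$, which needs $\mu^2-m^2$ in \emph{both} resolvent sets and hence \cite[Corollary~5.6]{T92}; the converse direction should be done by elimination, e.g.\ solving $(D_m-\mu)(\phi,\psi)=(0,f_2)$ shows $S^*S-(\mu^2-m^2)$ is surjective with bounded right inverse, hence boundedly invertible by self-adjointness. This converse is genuinely needed: the spectral mapping theorem applied to $D_m^2$ alone cannot attribute the sign of $\mu$, which is exactly why the paper builds the signed Weyl sequences. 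With these two points written out, your plan yields a complete proof.
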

\begin{proof}
  In order to prove (i), we verify first that
  \begin{equation} \label{inclusion3}
    \sigma(D_m) \setminus \{ \pm m\}= -\sqrt{\sigma(S^*S + m^2)\setminus\{m^2\}} \cup \sqrt{\sigma(SS^* + m^2)\setminus\{m^2\}};
  \end{equation}
  note that $\sigma(S S^*) \setminus \{ 0 \} = \sigma(S^* S) \setminus \{ 0 \}$ holds, cf., e.g., \cite[Corollary~5.6]{T92}, and thus the set on the right hand side of the last equation is symmetric around the origin and independent of  $\sign(m)$.
  For the first inclusion 
  \begin{equation} \label{inclusion1}
    \sigma(D_m) \setminus \{ \pm m\} \subset -\sqrt{\sigma(S^*S + m^2)\setminus\{m^2\}} \cup \sqrt{\sigma(SS^* + m^2)\setminus\{m^2\}},
  \end{equation}
  we note that 
  \begin{equation} \label{D_square}
    D_m^2 = \begin{pmatrix} S S^* + m^2 & 0 \\ 0 & S^* S + m^2 \end{pmatrix}.
  \end{equation}
  Since $\sigma(S S^*) \setminus \{ 0 \} = \sigma(S^* S) \setminus \{ 0 \}$, cf. \cite[Corollary~5.6]{T92}, this implies~\eqref{inclusion1}.
  
  Next, we show that 
  \begin{equation} \label{inclusion2}
     \sqrt{\sigma(SS^* + m^2)\setminus\{m^2\}} \subset \sigma(D_m) \setminus \{ \pm m\}.
  \end{equation}
  For this purpose let $\lambda \in \sigma(SS^*) \setminus \{ 0 \}$. Then, there exists a sequence $(\psi_n) \subset \dom SS^*$ such that $\| \psi_n \|_{\mathcal{H}_1} = 1$ and $(SS^* - \lambda) \psi_n \rightarrow 0$, as $n \rightarrow \infty$. Define
  \begin{equation*}
    \Phi_n := \big( D_m + \sqrt{\lambda + m^2} \big) \begin{pmatrix} \psi_n \\ 0 \end{pmatrix} = \begin{pmatrix} (m + \sqrt{\lambda+m^2}) \psi_n \\ S^* \psi_n \end{pmatrix}.
  \end{equation*}
  Then $\Phi_n \in \dom D_m$, $\| \Phi_n \|_{\mathcal{H}_1 \oplus \mathcal{H}_2} \geq (m+\sqrt{\lambda+m^2}) \| \psi_n \|_{\mathcal{H}_1}=m + \sqrt{\lambda+m^2} > 0$ independently of $n$, as $\lambda > 0$ by assumption. Moreover, we have
  \begin{equation} \label{approximate_eigensequence}
    \begin{split}
      \big( D_m - \sqrt{\lambda + m^2} \big) \Phi_n &= \begin{pmatrix} m - \sqrt{\lambda+m^2} & S \\ S^* & -m-\sqrt{\lambda+m^2} \end{pmatrix} \begin{pmatrix} (m + \sqrt{\lambda+m^2}) \psi_n \\ S^* \psi_n \end{pmatrix} \\
      &= \begin{pmatrix} (SS^*-\lambda) \psi_n \\ 0 \end{pmatrix}.
    \end{split}
  \end{equation}
  Since the last expression converges to zero due to the properties of $\psi_n$, as $n \rightarrow \infty$, we conclude that $\sqrt{\lambda+m^2} \in \sigma(D_m)$ and therefore, \eqref{inclusion2} is true.

  In a similar way as above one verifies that 
  \begin{equation} \label{inclusion4}
     -\sqrt{\sigma(S^*S + m^2)\setminus\{m^2\}} \subset \sigma(D_m) \setminus \{ \pm m\}
  \end{equation}
  holds. Indeed, choose for $\lambda \in \sigma(S^*S) \setminus \{ 0 \}$ a sequence $(\phi_n) \subset \dom S^*S$ such that $\| \phi_n \|_{\mathcal{H}_2} = 1$ and $(S^*S - \lambda) \phi_n \rightarrow 0$, as $n \rightarrow \infty$. Then
  \begin{equation*} %\label{approximate_eigensequence3}
    \Psi_n := \big( D_m - \sqrt{\lambda + m^2} \big) \begin{pmatrix} 0 \\ \phi_n \end{pmatrix} = \begin{pmatrix} S \phi_n \\ (-m - \sqrt{\lambda+m^2}) \phi_n  \end{pmatrix}
  \end{equation*}
  satisfies $\| \Psi_n \|_{\mathcal{H}_1 \oplus \mathcal{H}_2} \geq m + \sqrt{\lambda+m^2} > 0$ independently of $n$ and
  \begin{equation*}
    \begin{split}
      \big( D_m + \sqrt{\lambda + m^2} \big) \Psi_n 
      &= \begin{pmatrix} 0 \\ (S^*S-\lambda) \phi_n 0 \end{pmatrix}.
    \end{split}
  \end{equation*}
  Since the last expression tends to zero due to the properties of $\phi_n$, as $n \rightarrow \infty$, we conclude that $-\sqrt{\lambda+m^2} \in \sigma(D_m)$ and therefore, \eqref{inclusion4} is true. Together with~\eqref{inclusion1} and  \eqref{inclusion2} this implies that~\eqref{inclusion3} holds.

  Next, we show for $m \neq 0$ that $m \in \sigma(D_m)$ if and only if $0 \in \sigma(S S^*)$. For this we note first that for $m \in \sigma(D_m)$ there exists a sequence $(\phi_n, \psi_n) \in \dom D_m$ with $\|\phi_n\|_{\mathcal{H}_1} + \|\psi_n\|_{\mathcal{H}_2}=1$ such that 
  \begin{equation*}
    (D_m-m) \begin{pmatrix} \phi_n \\ \psi_n \end{pmatrix} = \begin{pmatrix} 0 & S \\ S^* & -2m \end{pmatrix} \begin{pmatrix} \phi_n \\ \psi_n \end{pmatrix} = \begin{pmatrix}  S \psi_n \\ S^* \phi_n  -2m \psi_n \end{pmatrix} \rightarrow 0, \quad \text{ as } n \rightarrow \infty.
  \end{equation*}
  This is equivalent to $\|\phi_n\|_{\mathcal{H}_1} + \|\psi_n\|_{\mathcal{H}_2}=1$, $S \psi_n \rightarrow 0$ for $n \rightarrow \infty$, and 
  \begin{equation} \label{approximate_eigensequence2}
    \begin{split}
      0 &= \lim_{n \rightarrow \infty} \left\| S^* \phi_n  -2m \psi_n \right\|^2_{\mathcal{H}_2} \\
      &= \lim_{n \rightarrow \infty} \big( \| S^* \phi_n \|^2_{\mathcal{H}_2} - 4 m \, \text{Re}\, (S^* \phi_n ,\psi_n)_{\mathcal{H}_2} + 4 m^2 \| \psi_n \|^2_{\mathcal{H}_2} \big) \\
      &= \lim_{n \rightarrow \infty} \big( \| S^* \phi_n \|^2_{\mathcal{H}_2} - 4 m \, \text{Re}\, ( \phi_n , S \psi_n)_{\mathcal{H}_1} + 4 m^2 \| \psi_n \|^2_{\mathcal{H}_2} \big) \\
      &= \lim_{n \rightarrow \infty} \big( \| S^* \phi_n \|^2_{\mathcal{H}_2} + 4 m^2 \| \psi_n \|^2_{\mathcal{H}_2} \big),
    \end{split}
  \end{equation}
  where $S\psi_n \rightarrow 0$ and $\| \phi_n \|_{\mathcal{H}_1} \leq 1$ for $n \rightarrow \infty$ were used in the last step. Hence, $\| \phi_n\|_{\mathcal{H}_1} \rightarrow 1$ and $\| S^* \phi_n\|_{\mathcal{H}_2}^2 \rightarrow 0$ for $n \rightarrow \infty$, which shows  $0 \in \sigma(S S^*)$.
  
  Conversely, if $0 \in \sigma(SS^*)$, then there exists a sequence $(\phi_n) \subset \dom (SS^*)$ with $\| \phi_n \|_{\mathcal{H}_1}=1$ for all $n \in \mathbb{N}$ and $\lim_{n \rightarrow \infty} \| S^* \phi_n\|_{\mathcal{H}_2} = 0$. Then $(\phi_n, 0) \in \dom D_m$ and 
  \begin{equation*}
    (D_m-m) \begin{pmatrix} \phi_n \\ 0 \end{pmatrix} = \begin{pmatrix} 0 & S \\ S^* & -2m \end{pmatrix} \begin{pmatrix} \phi_n \\ 0 \end{pmatrix} = \begin{pmatrix}  0 \\ S^* \phi_n  \end{pmatrix} \rightarrow 0, \quad \text{ as } n \rightarrow \infty,
  \end{equation*}
  which shows $m \in \sigma(D_m)$. Hence, we have proved that $m \in \sigma(D_m)$ if and only if $0 \in \sigma(SS^*)$.
  
  For $m \neq 0$ the statement that $-m \in \sigma(D_m)$ if and only if $0 \in \sigma(S^* S)$ can be done in the same way as above. This finishes the proof of~\eqref{susy_spectrum} for $m \neq 0$.
  
  If $m=0$, then $0 \in \sigma(D_0)$ is equivalent to the existence of a sequence $(\phi_n, \psi_n) \in \dom D_0$ such that $\| \phi_n \|_{\mathcal{H}_1} + \| \psi_n \|_{\mathcal{H}_2} = 1$ and 
  \begin{equation*}
    D_0 \begin{pmatrix} \phi_n \\ \psi_n \end{pmatrix} = \begin{pmatrix} S \psi_n \\ S^* \phi_n \end{pmatrix} \rightarrow 0, \qquad \text{as } n \rightarrow \infty.
  \end{equation*}
  This is true if and only if (a) $\| \phi_n \|_{\mathcal{H}_1} \geq c$ for some $c>0$ and infinitely many $n$ and $\| S^* \phi_n \|_{\mathcal{H}_2} \rightarrow 0$, as $n \rightarrow \infty$, or (b) $\| \psi_n \|_{\mathcal{H}_2} \geq c$ for some $c>0$ and infinitely many $n$ and $\| S \psi_n \|_{\mathcal{H}_1} \rightarrow 0$, as $n \rightarrow \infty$. This is equivalent to (a) $0 \in \sigma(SS^*)$ or (b) $0 \in \sigma(S^* S)$. Together with~\eqref{inclusion3} this finishes the proof of (i) also in the case $m = 0$.

  To prove item~(ii),
  assume first that $\lambda > 0$ and let $\phi \in \ker (SS^*-\lambda)$. Then, similarly as in~\eqref{approximate_eigensequence} one sees that 
  \begin{equation*}
    \Phi := \big(D_m + \sqrt{\lambda + m^2}\big) \begin{pmatrix} \phi \\ 0 \end{pmatrix} = \begin{pmatrix} (m + \sqrt{\lambda+m^2}) \phi \\ S^* \phi  \end{pmatrix} \in \ker \big(D_m - \sqrt{\lambda + m^2}\big)
  \end{equation*}
  and due to the explicit form of $\Phi$ we see that 
  \begin{equation} \label{dim2}
    \dim \ker (SS^*-\lambda) \leq \dim \ker \big(D_m - \sqrt{\lambda + m^2}\big)
  \end{equation}
  holds. Similarly, if $\psi \in \ker (S^*S-\lambda)$, then 
  \begin{equation*} 
    \Psi := \big(D_m - \sqrt{\lambda + m^2}\big) \begin{pmatrix} 0 \\ \psi \end{pmatrix} = \begin{pmatrix} S \psi \\ -(m + \sqrt{\lambda+m^2}) \psi  \end{pmatrix} \in \ker \big(D_m + \sqrt{\lambda + m^2}\big)
  \end{equation*}
  and due to the explicit form of $\Psi$ we see that 
  \begin{equation} \label{dim3}
    \dim \ker (S^*S-\lambda) \leq \dim \ker \big(D_m + \sqrt{\lambda + m^2}\big).
  \end{equation}
  
  Next, in view of~\eqref{D_square} we have
  \begin{equation*}
    \left(D_m - \sqrt{\lambda+m^2}\right) \left(D_m + \sqrt{\lambda+m^2}\right)  = \begin{pmatrix}
      SS^* - \lambda & 0 \\ 0 & S^*S - \lambda
    \end{pmatrix}
  \end{equation*}
  and hence, 
  \begin{equation*}
    \begin{split}
      \dim \ker \left(D_m - \sqrt{\lambda+m^2}\right) &+ \dim \ker \left(D_m + \sqrt{\lambda+m^2}\right) \\
      & \leq  \dim \ker(SS^*-\lambda) + \dim \ker (S^*S-\lambda).
    \end{split}
  \end{equation*}
  Since one has for $\lambda > 0$ the relation $\dim \ker(SS^*-\lambda) = \dim \ker (S^*S-\lambda)$ by \cite[Corollary~5.6]{T92}, we conclude that this together with~\eqref{dim2} and~\eqref{dim3} implies
  \begin{equation*}
    \dim \ker \left(D_m - \sqrt{\lambda+m^2}\right) = \dim \ker \left(D_m + \sqrt{\lambda+m^2}\right)  =\dim \ker(SS^*-\lambda),
  \end{equation*}
  which is the statement of assertion~(ii) for $\lambda>0$.
  
  Next, we show for $m \neq 0$ that $(\phi, \psi) \in \ker(D_m-m)$ if and only if $\psi=0$ and $\phi \in \ker SS^*$. Indeed, $(\phi, \psi) \in \ker(D_m-m)$ is equivalent to
  \begin{equation*}
    (D_m-m) \begin{pmatrix} \phi \\ \psi \end{pmatrix} = \begin{pmatrix}  S \psi \\ S^* \phi  -2m \psi \end{pmatrix} = \begin{pmatrix} 0 \\ 0 \end{pmatrix},
  \end{equation*}
  i.e. $S \psi = 0$ and $S^* \phi  -2m \psi=0$. As in~\eqref{approximate_eigensequence2} one finds that this is equivalent to
%   \begin{equation*}
%     \begin{split}
%       0 &= \left\| S^* \phi  -2m \psi \right\|^2_{\mathcal{H}_2} \\
%       &=  \| S^* \phi \|^2_{\mathcal{H}_2} - 4 m \, \text{Re}\, (S^* \phi ,\psi)_{\mathcal{H}_2} + 4 m^2 \| \psi \|^2_{\mathcal{H}_2}  \\
%       &= \| S^* \phi \|^2_{\mathcal{H}_2} - 4 m \, \text{Re}\, ( \phi , S \psi)_{\mathcal{H}_1} + 4 m^2 \| \psi \|^2_{\mathcal{H}_2} \\
%       &= \| S^* \phi \|^2_{\mathcal{H}_2} + 4 m^2 \| \psi \|^2_{\mathcal{H}_2},
%     \end{split}
%   \end{equation*}
%   where $S\psi = 0$ was used in the last step.
  $\psi =0$ and $S^* \phi = 0$. Thus, $\dim \ker(D_m-m) = \dim \ker SS^*$.
  
  For $m \neq 0$ the statement that $\dim \ker(D_m+m) = \dim \ker S^*S$ can be shown in the same way as $\dim \ker(D_m-m) = \dim \ker SS^*$.
  
  Eventually, if $m=0$, then $(\phi, \psi) \in \ker D_0$ if and only if
  \begin{equation*}
    D_0 \begin{pmatrix} \phi \\ \psi \end{pmatrix} = \begin{pmatrix} S \psi \\ S^* \phi \end{pmatrix} = \begin{pmatrix} 0 \\ 0 \end{pmatrix}.
  \end{equation*}
  This implies immediately $\dim \ker S^*S + \dim \ker SS^* = \dim \ker D_0$. Hence, statement (ii) has been shown in all cases.
  
  Finally, assertion~(iii) is an immediate consequence of the results from~(i) and~(ii).
\end{proof}

%%%%%%%%%%%%%%%%%%%%%%%%%%%%%%%%
%%%%%%%%%%%%%%%%%%%%%%%%%%%%%%%%
\vskip 0.8cm
\noindent {\bf Acknowledgments.}  
The author thanks Jussi Behrndt, Andrii Khrabustovskyi, and Peter Schlosser for helpful discussions. Moreover, helpful remarks from the anonymous reviewers are gratefully acknowledged.

%%%%%%%%%%%%%%%%%%%%%%%%%%%%%%%%
%%%%%%%%%%%%%%%%%%%%%%%%%%%%%%%%

\vskip 0.8cm
\noindent {\bf Data availability statement.}  
Data sharing not applicable to this article as no datasets were generated or analysed during the current study.

% *******************************************************************
% *******************************************************************

% *******************************************************************

\end{document}